\title{Extensions of Bougerol's identity in law and the associated anticipative path transformations}
\author{Yuu Hariya\thanks{Supported in part by JSPS KAKENHI Grant Number 17K05288}}
\date{\empty}
\numberwithin{equation}{section}
\theoremstyle{plain}
\newtheorem{thm}{Theorem}[section]
\newtheorem{prop}{Proposition}[section]
\newtheorem{cor}{Corollary}[section]
\newtheorem{lem}{Lemma}[section]
\theoremstyle{definition}
\theoremstyle{remark}
\newtheorem{rem}{Remark}[section]
\begin{document}

\newcommand\ND{\newcommand}
\newcommand\RD{\renewcommand}

\ND\N{\mathbb{N}}
\ND\R{\mathbb{R}}
\ND\Q{\mathbb{Q}}
\ND\C{\mathbb{C}}

\ND\F{\mathcal{F}}

\ND\kp{\kappa}

\ND\ind{\boldsymbol{1}}

\ND\al{\alpha }
\ND\la{\lambda }
\ND\La{\Lambda }
\ND\ve{\varepsilon}
\ND\Om{\Omega}

\ND\ga{\gamma}

\ND\lref[1]{Lemma~\ref{#1}}
\ND\tref[1]{Theorem~\ref{#1}}
\ND\pref[1]{Proposition~\ref{#1}}
\ND\sref[1]{Section~\ref{#1}}
\ND\ssref[1]{Subsection~\ref{#1}}
\ND\aref[1]{Appendix~\ref{#1}}
\ND\rref[1]{Remark~\ref{#1}} 
\ND\cref[1]{Corollary~\ref{#1}}
\ND\eref[1]{Example~\ref{#1}}
\ND\fref[1]{Fig.\ {#1} }
\ND\lsref[1]{Lemmas~\ref{#1}}
\ND\tsref[1]{Theorems~\ref{#1}}
\ND\dref[1]{Definition~\ref{#1}}
\ND\psref[1]{Propositions~\ref{#1}}
\ND\rsref[1]{Remarks~\ref{#1}}
\ND\sssref[1]{Subsections~\ref{#1}}

\ND\pr{\mathbb{P}}
\ND\ex{\mathbb{E}}
\ND\br{W}

\ND\prb[2]{P^{(#1)}_{#2}}
\ND\exb[2]{E^{(#1)}_{#2}}

\ND\eb[1]{e^{B_{#1}}}
\ND\ebm[1]{e^{-B_{#1}}}
\ND\hbe{\Hat{\beta}}
\ND\hB{\Hat{B}}
\ND\argsh{\mathrm{Argsh}\,}
\ND\zmu{z_{\mu}}
\ND\GIG[3]{\mathrm{GIG}(#1;#2,#3)}
\ND\gig[3]{I^{(#1)}_{#2,#3}}
\ND\argch{\mathrm{Argch}\,}

\ND\vp{\varphi}
\ND\eqd{\stackrel{(d)}{=}}
\ND\db[1]{B^{(#1)}}
\ND\da[1]{A^{(#1)}}
\ND\dz[1]{Z^{(#1)}}
\ND\Z{\mathcal{Z}}

\ND\anu{\alpha ^{(\nu )}}

\ND\Ga{\Gamma}
\ND\calE{\mathcal{E}}
\ND\calD{\mathcal{D}}

\ND\f{F}
\ND\g{G}

\ND\tr{\mathbb{T}}
\ND\T{T}

\ND{\rmid}[1]{\mathrel{}\middle#1\mathrel{}}

\def\thefootnote{{}}

\maketitle 
\begin{abstract}
Let $B=\{ B_{t}\} _{t\ge 0}$ be a one-dimensional standard 
Brownian motion and denote by $A_{t},\,t\ge 0$, the quadratic 
variation of the geometric Brownian motion $e^{B_{t}},\,t\ge 0$. 
Bougerol's celebrated identity (1983) asserts that, if 
$\beta =\{ \beta (t)\} _{t\ge 0}$ is another Brownian motion 
independent of $B$, then $\beta (A_{t})$ is identical in law with 
$\sinh B_{t}$ for every fixed $t>0$. In this paper, we extend 
Bougerol's identity to an identity in law for processes up to time 
$t$, which exhibits a certain invariance of the law of Brownian motion. 
The extension is described in terms of anticipative transforms 
of $B$ involving $A_{t}$ as an anticipating factor. A Girsanov-type 
formula for those transforms is shown. An extension of a variant 
of Bougerol's identity is also presented.
\footnote{Mathematical Institute, Tohoku University, Aoba-ku, Sendai 980-8578, Japan}
\footnote{E-mail: hariya@tohoku.ac.jp}
\footnote{{\itshape Keywords and Phrases}:~{Brownian motion}; {exponential functional}; {Bougerol's identity}; {anticipative path transformation}}
\footnote{{\itshape MSC 2020 Subject Classifications}:~Primary~{60J65}; Secondary~{60J55}, {60G30}}
\end{abstract}

\section{Introduction}\label{;intro}

Let $B=\{ B_{t}\} _{t\ge 0}$ be a one-dimensional standard 
Brownian motion. For every $\mu \in \R $, we denote by 
$\db{\mu }=\{ \db{\mu }_{t}:=B_{t}+\mu t\} _{t\ge 0}$ the 
Brownian motion with drift $\mu $, to which we associate 
the exponential additive functional $\da{\mu }_{t},\,t\ge 0$, 
defined by 
\begin{align*}
 \da{\mu }_{t}:=\int _{0}^{t}e^{2\db{\mu }_{s}}ds;  
\end{align*}
when $\mu =0$, we simply write $A_{t}$ for $\da{\mu }_{t}$. 
This additive functional is the quadratic variation of the 
geometric Brownian motion $e^{\db{\mu }_{t}},\,t\ge 0$, and 
appears in a number of fields such as mathematical finance, 
diffusion processes in random environments, stochastic 
analysis of Laplacians on hyperbolic spaces, and so on; see 
the detailed surveys \cite{mySI,mySII} by Matsumoto and Yor. 

Although the law of $A_{t}$, as well as of $\da{\mu }_{t}$, has 
a rather complicated expression (refer to the beginning of 
the \ssref{;sspcmain2} in this respect), there are alternative 
ways to understand it, 
one of which is provided by Bougerol's celebrated identity 
(\cite{bou}) asserting that, if $\beta =\{ \beta (t)\} _{t\ge 0}$ 
denotes another one-dimensional standard Brownian motion and 
if it is independent of $B$, then for every fixed $t>0$, 
\begin{align}
 \beta (A_{t})&\eqd \sinh B_{t}, \label{;boug}
\intertext{or, more generally, for every fixed $t>0$ and $x\in \R $,}
 \eb{t}\sinh x+\beta (A_{t})&\eqd \sinh (x+B_{t}); \label{;gboug}
\end{align}
the latter is originally due to Alili--Gruet \cite[Proposition~4]{ag}. 
For a straightforward derivation of \eqref{;boug} 
by means of stochastic differential equations, which works 
for \eqref{;gboug} as well, 
we refer the reader to an inventive argument 
\cite[Appendix]{cmy} by Alili and Dufresne.
From the former identity \eqref{;boug} in particular, we easily 
obtain the moment formula 
\begin{align*}
 \ex \!\left[ 
 \left( A_{t}\right) ^{\nu }
 \right] 
 =\frac{\sqrt{\pi }}{2^{\nu }\Gamma (\nu +1/2)}
 \ex \!\left[ |\sinh B_{t}|^{2\nu }\right] ,\quad \nu >-1/2,
\end{align*}
where $\Gamma $ is the gamma function. 
The expression of the right-hand side is fairly tractable. 
For a detailed account of Bougerol's identity and 
recent progress in its study such as extensions to other 
processes, see the survey \cite{vak} by Vakeroudis.

Define $C([0,\infty );\R )$ to be the space of 
continuous functions $\phi :[0,\infty )\to \R $.
We set 
\begin{align}\label{;defa}
 A_{t}(\phi ):=\int _{0}^{t}e^{2\phi _{s}}\,ds,\quad t\ge 0,
\end{align}
so that $\da{\mu }_{t}=A_{t}(\db{\mu })$ and 
$A_{t}=A_{t}(B)$, $t\ge 0$. In this paper, we show that 
Bougerol's identity may be extended to an identity in law 
for processes. Unless otherwise stated, we fix $t>0$ and 
for every $z\in \R $, consider the path transformation 
$\tr _{z}$ defined by 
\begin{align}\label{;ttrans}
 \tr _{z}(\phi )(s):=\phi _{s}-\log \left\{ 
 1+\frac{A_{s}(\phi )}{A_{t}(\phi )}\left( e^{z}-1\right) 
 \right\} ,\quad 0\le s\le t, 
\end{align}
which maps the space $C([0,t];\R )$ of real-valued 
continuous functions over $[0,t]$ to itself. We also denote by 
\begin{align*}
 \argsh x\equiv \log \left( x+\sqrt{1+x^{2}}\right) ,\quad x\in \R , 
\end{align*}
the inverse function of the hyperbolic sine function. One of the 
main results of the paper is stated as follows: 

\begin{thm}\label{;tmain1}
Suppose $B$ and $\beta $ are independent. 
Then, for every fixed $x\in \R $, the process 
\begin{align}\label{;ttrans1}
 \tr _{x+B_{t}-\argsh (e^{B_{t}}\sinh x +\beta (A_{t}))
 }
 (B)(s),\quad 0\le s\le t, 
\end{align}
is identical in law with $\{ B_{s}\} _{0\le s\le t}$.
\end{thm}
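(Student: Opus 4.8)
The goal is to show $\ex[F(\tr_{Z}(B))]=\ex[F(B)]$ for every bounded measurable $F$ on $C([0,t];\R)$, where $Z:=x+B_{t}-\argsh(e^{B_{t}}\sinh x+\beta(A_{t}))$. I would first record two elementary facts about the maps $\tr_{z}$, $z\in\R$: they form a one‑parameter group, $\tr_{z}\circ\tr_{w}=\tr_{z+w}$ (so $\tr_{z}^{-1}=\tr_{-z}$), and a direct computation from \eqref{;ttrans} gives the clock identity
\[
 A_{s}(\tr_{z}(\phi))=\frac{A_{s}(\phi)}{1+\tfrac{A_{s}(\phi)}{A_{t}(\phi)}(e^{z}-1)},
\]
whence $A_{t}(\tr_{z}(\phi))=e^{-z}A_{t}(\phi)$ and $\tr_{z}(\phi)(t)=\phi_{t}-z$: $\tr_{z}$ rescales the terminal value of $A$ deterministically and shifts the endpoint by a constant. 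Next I would condition on $B$ and integrate out $\beta$. Since $\beta\perp B$ and $\beta(A_{t})\mid B\sim N(0,A_{t})$, the conditional law of $Z$ given $B$ has density $z\mapsto w(z;B_{t},A_{t})$ with
\[
 w(z;b,a)=\frac{\cosh(x+b-z)}{\sqrt{2\pi a}}\exp\!\left(-\frac{(\sinh(x+b-z)-e^{b}\sinh x)^{2}}{2a}\right),
\]
obtained from the standard Gaussian density by the substitution $\sinh(x+b-z)=e^{b}\sinh x+\sqrt{a}\,\mathcal N$. Hence $\ex[F(\tr_{Z}(B))]=\int_{\R}\ex[F(\tr_{z}(B))\,w(z;B_{t},A_{t})]\,dz$. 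Note that Bougerol's identity is \emph{not} used here, only Gaussianity of $\beta(A_{t})$; the identity \eqref{;gboug} re‑enters at the very end.

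The main tool is a change‑of‑measure (Girsanov‑type) formula for the fixed‑parameter transform $\tr_{z}$ — the ``Girsanov‑type formula for those transforms'' announced in the abstract. Because $\tr_{z}$ is anticipative (the factor $A_{t}=A_{t}(B)$ is $\mathcal F_{t}$‑measurable, not adapted), this is the delicate point: one must work either through the initial enlargement of the filtration by $A_{t}$, or through Ramer–Kusuoka‑type anticipative Wiener‑space calculus, and the resulting weight carries a Carleman–Fredholm/Jacobian‑type correction beyond the formal exponential $\exp(-\int_{0}^{t}\tfrac{(e^{z}-1)e^{2B_{s}}}{A_{t}N_{s}}\,dB_{s}-\tfrac12\int_{0}^{t}(\cdots)\,ds)$ with $N_{s}=1+\tfrac{A_{s}}{A_{t}}(e^{z}-1)$. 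Granting such a formula $\ex[G(\tr_{z}(B))]=\ex[G(B)\,\mathcal D_{z}(B)]$, the scaling/shift identities of the first paragraph give $w(z;B_{t},A_{t})=w\!\left(z;\tr_{z}(B)(t)+z,\;e^{z}A_{t}(\tr_{z}(B))\right)$, i.e. the kernel $w$ depends on $B$ only through the output path $\tr_{z}(B)$; feeding this into the change‑of‑measure formula turns $\ex[F(\tr_{z}(B))\,w(z;B_{t},A_{t})]$ into $\ex[F(B)\,\kappa_{z}(B)]$ for an explicit kernel $\kappa_{z}$, and the theorem reduces to the deterministic identity $\int_{\R}\kappa_{z}(B)\,dz=1$ a.s.

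For this last step, the substitution $u=e^{z}$ converts $\int_{\R}\kappa_{z}(B)\,dz$ into a generalized‑inverse‑Gaussian integral of the type $\int_{0}^{\infty}u^{-1/2}e^{-a/u-bu}\,du=\sqrt{\pi/b}\,e^{-2\sqrt{ab}}$ (the value of $2(a/b)^{1/4}K_{1/2}(2\sqrt{ab})$), with $a,b$ built from $\cosh(x+B_{t})$, $e^{B_{t}}$, $\cosh x$ and $A_{t}$; the exponent then cancels identically using the addition formula $\cosh(x+B_{t})\cosh x-\sinh(x+B_{t})\sinh x=\cosh B_{t}$ together with $2e^{B_{t}}\cosh B_{t}=e^{2B_{t}}+1$, and the remaining prefactor collapses to $1$. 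This collapse is precisely where Bougerol's identity \eqref{;gboug}, which is built into the shape of the kernel $w$, is being consumed. (Equivalently one may prove the disintegrated statement: for each $b\in\R$, $\ex\!\big[F(\tr_{B_{t}-b}(B))\,\tfrac{\cosh(x+b)}{\sqrt{2\pi A_{t}}}e^{-(\sinh(x+b)-e^{B_{t}}\sinh x)^{2}/(2A_{t})}\big]$ equals $\tfrac{e^{-b^{2}/(2t)}}{\sqrt{2\pi t}}$ times the expectation of $F$ of a Brownian bridge from $0$ to $b$ on $[0,t]$; integrating over $b$ recovers the theorem.)

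The step I expect to be the main obstacle is the Girsanov‑type formula of the second paragraph: identifying the anticipative correction term exactly, and checking the integrability needed (e.g. $\ex[1/A_{t}]<\infty$ and finiteness of the relevant stochastic/Malliavin quantities) so that the conditioning, the Fubini exchanges, and the substitution in the last step are all licit. Once that formula is in hand, the remainder is bookkeeping with hyperbolic identities and one Bessel‑function integral.
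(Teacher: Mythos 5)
Your route is genuinely different from the paper's, and its two computational halves are correct: conditioning on $B$ to disintegrate the random index into the kernel $w(z;B_{t},A_{t})$ is legitimate; the observation that this kernel is a functional of the output path $\tr _{z}(B)$ (via $\tr _{z}(B)(t)=B_{t}-z$ and $A_{t}(\tr _{z}(B))=e^{-z}A_{t}$, i.e.\ \pref{;pttrans}\thetag{i},\thetag{ii}) is exactly right; and the final integral does collapse to $1$: with $u=e^{z}$ one gets $\int _{0}^{\infty }u^{-3/2}e^{-a/u-bu}\,du=\sqrt{\pi /a}\,e^{-2\sqrt{ab}}$ with $a=\cosh ^{2}(x+B_{t})/(2A_{t})$ and $b=e^{2B_{t}}\cosh ^{2}x/(2A_{t})$, and the exponent cancels by $\cosh (x+B_{t})\cosh x-\sinh (x+B_{t})\sinh x=\cosh B_{t}$. (Your parenthetical disintegrated statement is essentially \lref{;lpinned}, which the paper derives \emph{from} the theorem in \sref{;sptmain3}, and your claim that \eqref{;gboug} is ``consumed'' in the collapse has it backwards: nothing in the computation inputs Bougerol's identity, so your route would reprove it.)

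The genuine gap is the change-of-measure formula $\ex [G(\tr _{z}(B))]=\ex [G(B)\,\mathcal{D}_{z}(B)]$, which you grant rather than prove; it is precisely \tref{;tmain2}, with $\mathcal{D}_{z}(B)=\exp \{ (\cosh B_{t}-\cosh (z+B_{t}))/Z_{t}\} $, and your whole argument stands or falls with it. The path you sketch toward it --- Ramer--Kusuoka anticipative Wiener-space calculus with a Carleman--Fredholm correction --- is one the paper explicitly declines to carry out (see the concluding remarks in \sref{;scr}); identifying that determinant is an open computation, not bookkeeping. The paper instead gets everything from the Matsumoto--Yor theory of $Z_{s}=e^{-B_{s}}A_{s}$: conditionally on the filtration of $Z$ up to time $t$ with $1/Z_{t}=u$, the terminal value $B_{t}$ has density proportional to $e^{-u\cosh y}\,dy$ (\pref{;pmy}). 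This yields \tref{;tmain2} in a few lines (\lref{;lstart}) and, combined with the identity in law of \cite{har}, the process-level \lref{;lext}, from which \tref{;tmain1} follows by the algebra of the maps $\tr _{z}$ alone --- no anticipative calculus, no explicit density of $A_{t}$, and no Bessel integral. If you replace your second paragraph by the paper's conditioning argument for \tref{;tmain2}, your proof closes and becomes a legitimately different (if more computational) derivation of \tref{;tmain1}; as written, its main ingredient is missing and the proposed way of supplying it is substantially harder than the problem itself.
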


To see that the above theorem extends Bougerol's identity, we 
evaluate \eqref{;ttrans1} at $s=t$. Then the theorem entails 
\begin{align*}
 -x+\argsh \bigl( e^{B_{t}}\sinh x +\beta (A_{t})\bigr) 
 \eqd B_{t},
\end{align*}
which is nothing but \eqref{;gboug}. 

\begin{rem}\label{;rnotable}
A notable thing about the theorem is 
that, in view of \eqref{;gboug},  
the difference of the two random variables with same 
law, $x+B_{t}$ and $\argsh \bigl( e^{B_{t}}\sinh x +\beta (A_{t})\bigr) $, 
is substituted into $z$ of the transform $\tr _{z}(B)$, and 
the resulting process is distributed as 
$\tr _{0}(B)$ as if those two random variables were identical.
\end{rem}
In \sref{;sptmain1}, we prove \tref{;tmain1} in an extended form. 
See \tref{;tmain1d}. 

Following the notation used in a series of 
papers \cite{myPI,myPII,my2003,mySII} by Matsumoto and Yor, 
we set 
\begin{align*}
 \dz{\mu }_{t}:=e^{-\db{\mu }_{t}}\!\da{\mu }_{t},\quad t\ge 0, 
\end{align*}
to which we associate the path transformation $Z$ defined by 
\begin{align}\label{;defz}
 Z_{t}(\phi ):=e^{-\phi _{t}}A_{t}(\phi ),\quad t\ge 0,
\end{align}
for $\phi \in C([0,\infty );\R )$, so that 
$\dz{\mu }_{t}=Z_{t}(\db{\mu }),\,t\ge 0$. 
With slight abuse of notation, we simply write $Z_{t}$ for 
$Z_{t}(B)$ as well. In this paper, we also prove an absolutely 
continuous relationship for the anticipative transforms 
$\tr _{z}(B),\,z\in \R $, of the Brownian motion $B$, and reveal 
how \tref{;tmain1} is connected to it.

\begin{thm}\label{;tmain2}
Fix $z\in \R $. For every nonnegative measurable functional 
$F$ on $C([0,t];\R )$, we have 
\begin{align}\label{;t21}
 \ex \!\left[ 
 F\bigl( \tr _{z}(B)(s),s\le t\bigr) 
 \right] 
 =\ex \!\left[ 
 \exp \left\{ 
 \frac{\cosh B_{t}-\cosh (z+B_{t})}{Z_{t}}
 \right\} F(B_{s},s\le t)
 \right] ,
\end{align}
or, equivalently, 
\begin{align}\label{;t22}
 \ex \!\left[ 
 F(B_{s},s\le t)
 \right] 
 =\ex \!\left[ 
 \exp \left\{ 
 \frac{\cosh B_{t}-\cosh (z+B_{t})}{Z_{t}}
 \right\} 
 F\bigl( \tr _{-z}(B)(s),s\le t\bigr)
 \right] .
\end{align}
\end{thm}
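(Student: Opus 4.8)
The plan is to establish \eqref{;t21} by conditioning on the $\sigma$-field $\Z _{t}:=\sigma (Z_{s};0\le s\le t)$ and invoking the Matsumoto--Yor description of the conditional law of $B_{t}$ given $\Z _{t}$; \eqref{;t22} will then follow from \eqref{;t21} by a short duality argument. First I would record the elementary pathwise properties of the family $\{ \tr _{z}\} _{z\in \R }$: for $\phi \in C([0,t];\R )$ with $\phi _{0}=0$ and $z,z'\in \R $, (i) $\tr _{z'}\circ \tr _{z}=\tr _{z+z'}$, so $\tr _{z}$ is a bijection of $\{ \phi :\phi _{0}=0\} $ with $(\tr _{z})^{-1}=\tr _{-z}$; (ii) $Z_{s}(\tr _{z}(\phi ))=Z_{s}(\phi )$ for every $0\le s\le t$, while $A_{t}(\tr _{z}(\phi ))=e^{-z}A_{t}(\phi )$ and $\tr _{z}(\phi )(t)=\phi _{t}-z$; (iii) the path $\phi $ is recovered from the pair $\bigl( \{ Z_{s}(\phi )\} _{0\le s\le t},\,A_{t}(\phi )\bigr) $, since $A_{s}(\phi )=e^{\phi _{s}}Z_{s}(\phi )$ and $\frac{d}{ds}A_{s}(\phi )=e^{2\phi _{s}}=A_{s}(\phi )^{2}/Z_{s}(\phi )^{2}$ give $1/A_{s}(\phi )=1/A_{t}(\phi )+\int _{s}^{t}du/Z_{u}(\phi )^{2}$ for $0<s\le t$, whence $\phi _{s}=\log \bigl( A_{s}(\phi )/Z_{s}(\phi )\bigr) $. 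All of these follow by direct computation from \eqref{;ttrans}, \eqref{;defa} and \eqref{;defz}. Writing $\calD _{z}(\phi ):=\exp \bigl\{ (\cosh \phi _{t}-\cosh (z+\phi _{t}))/Z_{t}(\phi )\bigr\} $ for the density in \eqref{;t21}, property (ii) at once yields the cocycle relation $\calD _{z}(\tr _{z}(\phi ))\,\calD _{-z}(\phi )=1$.

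By (iii) the pair $\bigl( \{ Z_{s}(B)\} _{s\le t},A_{t}\bigr) $ generates the Brownian filtration at time $t$, and conditionally on $\Z _{t}$ the whole path $\{ B_{s}\} _{0\le s\le t}$ is a measurable function of the single variable $A_{t}$ (equivalently of $B_{t}=\log (A_{t}/Z_{t})$); denote by $\phi _{b}$ the $\Z _{t}$-measurable path with $Z_{s}(\phi _{b})=Z_{s}(B)$ for all $s$ and $\phi _{b}(t)=b$, so that $B=\phi _{B_{t}}$ and, by (ii), $\tr _{z}(\phi _{b})=\phi _{b-z}$. I would then invoke the Matsumoto--Yor analogue of Pitman's theorem for exponential Brownian functionals (\cite{myPI,myPII,my2003}): the conditional law of $B_{t}$ given $\Z _{t}$ has density proportional to $e^{-\cosh b/Z_{t}}$ on $\R $ (equivalently, $A_{t}=e^{B_{t}}Z_{t}$ is conditionally generalized inverse Gaussian, with density proportional to $\lambda ^{-1}\exp \{ -\lambda /(2Z_{t}^{2})-1/(2\lambda )\} $ on $(0,\infty )$).

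With these ingredients \eqref{;t21} is immediate: for every nonnegative measurable $F$,
\begin{align*}
 \ex \!\left[ F(\tr _{z}(B))\mid \Z _{t}\right]
 &=\frac{\int _{\R }F(\phi _{b-z})\,e^{-\cosh b/Z_{t}}\,db}{\int _{\R }e^{-\cosh b/Z_{t}}\,db}
 =\frac{\int _{\R }F(\phi _{b})\,e^{-\cosh (b+z)/Z_{t}}\,db}{\int _{\R }e^{-\cosh b/Z_{t}}\,db}\\
 &=\frac{\int _{\R }\calD _{z}(\phi _{b})\,F(\phi _{b})\,e^{-\cosh b/Z_{t}}\,db}{\int _{\R }e^{-\cosh b/Z_{t}}\,db}
 =\ex \!\left[ \calD _{z}(B)\,F(B)\mid \Z _{t}\right] ,
\end{align*}
where the second equality is the substitution $b\mapsto b+z$ (translation invariance of Lebesgue measure) and the third uses $Z_{t}(\phi _{b})=Z_{t}$, $\phi _{b}(t)=b$; taking expectations gives \eqref{;t21}. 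Finally, applying \eqref{;t21} with $z$ replaced by $-z$ to the nonnegative functional $F/\calD _{-z}$ and using the cocycle relation in the form $\calD _{-z}(\tr _{-z}(B))=1/\calD _{z}(B)$ turns it into \eqref{;t22}; the reverse implication is symmetric. (Taking $F\equiv 1$ in \eqref{;t21} also records $\ex [\calD _{z}(B)]=1$, consistent with the normalisation of the Matsumoto--Yor conditional law.)

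The only non-routine ingredient is the Matsumoto--Yor conditional law used above; everything else is bookkeeping. If a self-contained derivation of that input is wanted, the natural route is to show that $\tr _{z}$ sends the law of $B$ conditioned on $(B_{t},A_{t})=(b,a)$ to the law conditioned on $(b-z,e^{-z}a)$ --- on the event $\{ A_{t}=a\} $ the transform \eqref{;ttrans} is adapted, so this is a Girsanov computation once $(B_{t},A_{t})$ is frozen, and its Jacobian turns out to depend on the path only through $(B_{t},A_{t})$, hence is constant on the two bridges and equals $1$ by mass --- and then to disintegrate against the explicit joint density $p$ of $(B_{t},A_{t})$, which satisfies the matching identity $e^{z}p(b+z,e^{z}a)=p(b,a)\exp \{ ((1-e^{z})e^{2b}+(1-e^{-z}))/(2a)\} $, as one checks from Yor's formula for $p$. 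Either way the mechanism is the same: in the coordinates of (iii) the transform $\tr _{z}$ is the dilation $A_{t}\mapsto e^{-z}A_{t}$, and the generalized-inverse-Gaussian structure of $A_{t}$ makes the associated Radon--Nikodym factor collapse to $\calD _{z}$.
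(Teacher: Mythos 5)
Your proof is correct and follows essentially the same route as the paper: both hinge on the Matsumoto--Yor description of the conditional law of $B_{t}$ given $\Z _{t}$ (\pref{;pmy}) combined with the facts that $\tr _{z}$ preserves the $Z$-path and shifts the endpoint by $z$, and both then recover the full path from $\bigl( \{ Z_{s}\} _{s\le t},B_{t}\bigr) $ via $1/A_{s}=1/A_{t}+\int _{s}^{t}du/Z_{u}^{2}$. The only difference is organizational: you parametrize the conditional paths by their endpoint and perform a one-line change of variables, whereas the paper first isolates the endpoint-level identity (\lref{;lstart}), restates it under the tilted measure $\pr _{t,z}$, and then carries out the same path reconstruction by differentiation.
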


A link between the above theorem and the Malliavin calculus 
will be mentioned at the end of the paper.

For each $\phi \in C([0,\infty );\R )$ such that 
$A_{\infty }(\phi ):=\lim \limits_{t\to \infty }A_{t}(\phi )<\infty $, 
we define 
\begin{align}\label{;ttransd}
 \tr ^{*}_{z}(\phi )(s):=\phi _{s}-\log \left\{ 
 1+\frac{A_{s}(\phi )}{A_{\infty }(\phi )}\left( e^{z}-1\right) 
 \right\} ,\quad s\ge 0.
\end{align}
By applying the Cameron--Martin relation in \tref{;tmain2} and 
passing to the limit as $t\to \infty $, we immediately obtain 

\begin{cor}\label{;cmain2}
Let $\mu >0$ and fix $z\in \R $. For every nonnegative 
measurable functional $F$ on $C([0,\infty );\R )$, we have 
\begin{align}\label{;ec1}
 \ex \!\left[ 
 F\bigl( \tr ^{*}_{z}(\db{-\mu })(s),s\ge 0\bigr) 
 \right] 
 =e^{-\mu z}\ex \!\left[ 
 \exp \left( \frac{1-e^{-z}}{2\da{-\mu }_{\infty }}\right) 
 F\bigl( \db{-\mu }_{s},s\ge 0\bigr) 
 \right] ,
\end{align}
or, equivalently, 
\begin{align}\label{;ec2}
 \ex \!\left[ 
 F\bigl( \db{-\mu }_{s},s\ge 0\bigr) 
 \right] 
 =e^{-\mu z}\ex \!\left[ 
 \exp \left( \frac{1-e^{-z}}{2\da{-\mu }_{\infty }}\right) 
 F\bigl( \tr ^{*}_{-z}(\db{-\mu })(s),s\ge 0\bigr) 
 \right] .
\end{align}
\end{cor}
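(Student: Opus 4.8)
The plan is to deduce the corollary from Theorem~\ref{;tmain2} by first inserting a Cameron--Martin (Girsanov) factor to pass from the driftless Brownian motion $B$ to $\db{-\mu}$, and then letting $t\to\infty$. Fix $\mu>0$ and $z\in\R$. Start from \eqref{;t21} applied to a functional of the form $F(\phi)=G(\phi)\,e^{-\mu\phi_{t}-\mu^{2}t/2}$, where $G$ is a nonnegative measurable functional on $C([0,t];\R)$ that depends only on the path up to a fixed finite horizon; by the Cameron--Martin theorem, $\ex[G(\db{-\mu}_{s},s\le t)]=\ex[e^{-\mu B_{t}-\mu^{2}t/2}G(B_{s},s\le t)]$, so the right-hand side of \eqref{;t21} with this $F$ rewrites as an expectation under the law of $\db{-\mu}$. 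The key algebraic point on the left-hand side is that the transform interacts with the drift change in a controlled way: one checks from the definitions \eqref{;ttrans} and \eqref{;defa} that $A_{t}(\tr_{z}(\phi))$ and the endpoint value $\tr_{z}(\phi)(t)=\phi_{t}-z$ have explicit closed forms, so the density $e^{-\mu\tr_{z}(B)(t)-\mu^{2}t/2}=e^{\mu z}\,e^{-\mu B_{t}-\mu^{2}t/2}$ produces exactly the factor $e^{-\mu z}$ appearing in \eqref{;ec1}. After this substitution, \eqref{;t21} becomes an identity between expectations under the law of $\db{-\mu}$, with the weight $\exp\{(\cosh B_{t}-\cosh(z+B_{t}))/Z_{t}\}$ recombined with the drift into the form appropriate for $\db{-\mu}$.

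The second step is the passage to the limit $t\to\infty$. Since $\mu>0$, we have $\da{-\mu}_{\infty}<\infty$ almost surely, $\db{-\mu}_{t}\to-\infty$, and hence $Z_{t}(\db{-\mu})=e^{-\db{-\mu}_{t}}\da{-\mu}_{t}\to\infty$, so the two $\cosh$ terms in the exponent, after the drift is absorbed, must be analyzed carefully; the point is that $\cosh(z+\db{-\mu}_{t})-\cosh(\db{-\mu}_{t})\sim \tfrac12(e^{-z}-1)e^{-\db{-\mu}_{t}}$ as $t\to\infty$, and dividing by $\da{-\mu}_{t}$ (note $Z_{t}=e^{-\db{-\mu}_{t}}\da{-\mu}_{t}$, so $e^{-\db{-\mu}_{t}}/Z_{t}=1/\da{-\mu}_{t}$) yields in the limit the exponent $(1-e^{-z})/(2\da{-\mu}_{\infty})$ of \eqref{;ec1}. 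Simultaneously, $\tr^{*}_{z}(\db{-\mu})$ defined in \eqref{;ttransd} is exactly the $t\to\infty$ limit of $\tr_{z}(\db{-\mu})$ because $A_{s}(\db{-\mu})/A_{t}(\db{-\mu})\to A_{s}(\db{-\mu})/A_{\infty}(\db{-\mu})$ uniformly for $s$ in compacts. To justify the interchange of limit and expectation one restricts first to functionals $G$ depending on the path up to some fixed time $u<\infty$ and then uses monotone convergence, exploiting the boundedness of the relevant exponential weights once we observe the exponent $(\cosh B_{t}-\cosh(z+B_{t}))/Z_{t}$ has a definite sign (nonpositive when $z\ge 0$, so the weight is bounded by $1$, and one handles $z<0$ symmetrically or by a localization/truncation argument).

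Equation \eqref{;ec2} then follows from \eqref{;ec1} by the same symmetry that yields \eqref{;t22} from \eqref{;t21}: one verifies that $\tr^{*}_{z}$ and $\tr^{*}_{-z}$ are mutual inverses on the set $\{A_{\infty}(\phi)<\infty\}$ --- indeed $A_{s}(\tr^{*}_{z}(\phi))/A_{\infty}(\tr^{*}_{z}(\phi))=A_{s}(\phi)/A_{\infty}(\phi)$, so $\tr^{*}_{z}$ acts on this ratio trivially and composes additively in $z$ on the logarithmic correction --- and substitutes $F\mapsto F\circ\tr^{*}_{-z}$ together with $z\mapsto -z$ in \eqref{;ec1}, checking that the weight transforms correctly (the exponent picks up the right sign and $\da{-\mu}_{\infty}$ is invariant under $\tr^{*}_{\pm z}$ since it depends only on the ratio being normalized). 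The main obstacle I anticipate is the limit interchange in the second step: controlling the convergence of the exponential weight $\exp\{(\cosh\db{-\mu}_{t}-\cosh(z+\db{-\mu}_{t}))/Z_{t}\}$ uniformly enough to apply dominated or monotone convergence, particularly handling the sign of the exponent and ensuring integrability when $z<0$; everything else is a matter of bookkeeping with the explicit formulas for $A_{s}\circ\tr_{z}$ and the Cameron--Martin density.
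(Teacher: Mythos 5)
Your first step (substituting $F(\phi)\,e^{-\mu\phi_{t}-\mu^{2}t/2}$ into \eqref{;t21}, using $\tr _{z}(\phi )(t)=\phi _{t}-z$ to extract the factor $e^{\mu z}$, and then letting $t\to \infty $) is exactly the route the paper takes, and your identification of the limiting exponent $(1-e^{-z})/(2\da{-\mu }_{\infty })$ is correct. The gap is in the step you yourself flag as the main obstacle: the interchange of limit and expectation. Your proposed justification --- that the exponent $(\cosh B_{t}-\cosh (z+B_{t}))/Z_{t}$ ``has a definite sign (nonpositive when $z\ge 0$, so the weight is bounded by $1$)'' --- is false. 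Writing the exponent as $\bigl( e^{2\db{-\mu }_{t}}(1-e^{z})+1-e^{-z}\bigr) /\bigl( 2\da{-\mu }_{t}\bigr) $, you see that for $z>0$ the two terms have opposite signs, and on the event that $\db{-\mu }_{t}$ is very negative the exponent is positive and of order $(1-e^{-z})/(2\da{-\mu }_{t})$, which is unbounded. Monotone convergence does not apply either, since the weight is not monotone in $t$. What is actually needed is a quantitative exponential moment bound: the paper proves (its Lemma~\ref{;lexpm}, via the explicit density of $A_{t}$ from Alili--Gruet) that $\ex [\exp (\theta /(2\da{\mu }_{t}))]<\infty $ for all $\theta <1$, and then establishes uniform integrability of the weights by bounding them, for $z>0$, by $M\exp \bigl( (1-e^{-z})/(2\da{-\mu }_{t_{n}})\bigr) $ with $1-e^{-z}<1$, and, for $z<0$, by $M\exp \bigl( (1-e^{z})/(2A_{t}(R(\db{-\mu })))\bigr) $ combined with the time-reversal identity $R(\db{-\mu })\eqd \db{\mu }$. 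Neither the moment lemma nor the $z<0$ reduction is a routine ``localization/truncation''; without some substitute for them your limit passage does not go through.

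A secondary point: your justification of $\tr ^{*}_{-z}\circ \tr ^{*}_{z}=\mathrm{id}$ via the claim $A_{s}(\tr ^{*}_{z}(\phi ))/A_{\infty }(\tr ^{*}_{z}(\phi ))=A_{s}(\phi )/A_{\infty }(\phi )$ is incorrect --- from $1/A_{s}(\tr ^{*}_{z}(\phi ))=1/A_{s}(\phi )+(e^{z}-1)/A_{\infty }(\phi )$ and $A_{\infty }(\tr ^{*}_{z}(\phi ))=e^{-z}A_{\infty }(\phi )$ one checks the ratio is not preserved unless $z=0$. The conclusion you want is nonetheless true and follows from the additivity of $(e^{z}-1)/A_{\infty }(\phi )$ under composition, exactly as in \pref{;pttrans}\thetag{iii}; the derivation of \eqref{;ec2} from \eqref{;ec1} then needs only the substitution $F\mapsto F\circ \tr ^{*}_{-z}$ and does not require any invariance of $\da{-\mu }_{\infty }$.
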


When $\mu >0$, we know 
$\da{-\mu }_{\infty }=\lim \limits_{t\to \infty }\da{-\mu }_{t}<\infty $ 
a.s.; in fact, it is shown by Dufresne 
\cite[Proposition~4.4.4(b)]{duf} that 
\begin{align}\label{;duf}
 \da{-\mu }_{\infty }
 &\eqd \frac{1}{2\ga _{\mu }}, 
\end{align}
where $\ga _{\mu }$ is a gamma random variable with parameter 
$\mu $: 
\begin{align*}
 \pr (\ga _{\mu }\in du)
 &=\frac{1}{\Gamma (\mu )}u^{\mu -1}e^{-u}\,du,\quad u>0. 
\end{align*}

\begin{rem}\label{;rduf}
Taking $F\equiv 1$ and $z=-\log (1+\al )$ for $\al >-1$, 
we may deduce Dufresne's identity \eqref{;duf} 
from \cref{;cmain2} in such a way that, for any $\al >-1$,  
\begin{align*}
 \ex \!\left[ 
 \exp \left( 
 -\frac{\al }{2\da{-\mu }_{\infty }}
 \right) 
 \right] &=\left( \frac{1}{1+\al }\right) ^{\mu }\\
 &=\frac{1}{\Gamma (\mu )}\int _{0}^{\infty }du\,
 u^{\mu -1}e^{-(1+\al )u}, 
\end{align*}
which implies \eqref{;duf} thanks to the injectivity of the 
Laplace transform.
\end{rem}

An intriguing fact which may be deduced from \cref{;cmain2} 
is the following invariance of the law of $\db{-\mu}$:
\begin{align}\label{;inv} 
 \Bigl\{ 
 \tr ^{*}_{\log ( 2\ga _{\mu }\da{-\mu }_{\infty }) }
 (\db{-\mu })(s) 
 \Bigr\} _{s\ge 0}
 \eqd \bigl\{ \db{-\mu }_{s}\bigr\} _{s\ge 0}.
\end{align}
See \pref{;pintr}. Here $B$ and $\ga _{\mu }$ are 
independent on the left-hand side. Noting 
$
\log \bigl( 2\ga _{\mu }\da{-\mu }_{\infty }\bigr) 
=\log (2\ga _{\mu })-\log \bigl( 1/\da{-\mu }_{\infty }\bigr) 
$ 
and remembering Dufresne's identity 
\eqref{;duf}, we may compare the above invariance with 
\rref{;rnotable}. Our research is also inspired by 
Donati-Martin--Matsumoto--Yor \cite{dmy}, in which 
a family of path transformations on $C([0,\infty );\R )$, denoted by 
$T_{\al },\,\al >0$, is introduced and its properties are 
investigated. We discuss its connection with 
transformations \eqref{;ttransd} and \cref{;cmain2}, 
as well as with the above-mentioned invariance; see 
\ssref{;sspcmain2}.

Let $\ve $ denote a Rademacher (or symmetric Bernoulli) 
random variable taking values $\pm 1$ with equal probability. 
The following variant of Bougerol's identity 
is known (see, e.g., \cite[Corollary~2.2]{vak}): for every 
fixed $t>0$, 
\begin{align}\label{;boug1}
 \beta \bigl( \da{1}_{t}\bigr) \eqd \sinh \db{\ve }_{t}, 
\end{align}
where, on the left-hand side, $\beta =\{ \beta (s)\} _{s\ge 0}$ 
is a one-dimensional standard Brownian motion independent of 
$B$ as before, and on the right-hand side, $\ve $ is independent 
of $B$. \tref{;tmain1} (or, more precisely, \tref{;tmain1d}) enables 
us to obtain an extension of \eqref{;boug1}. In the sequel, 
given a real-valued process $X=\{ X_{s}\} _{s\ge 0}$ 
and a point $a\in \R $, we denote by $\tau _{a}(X)$ the first 
hitting time of $X$ to the level $a$: 
\begin{align*}
 \tau _{a}(X):=\inf \{ s\ge 0;\,X_{s}=a\}  
\end{align*}
with the convention $\inf \emptyset =\infty $. 

\begin{thm}\label{;tmain3}
Let $\beta =\{ \beta (s)\} _{s\ge 0}$ and 
$\hB =\{ \hB _{s}\} _{s\ge 0}$ be one-dimensional standard 
Brownian motions. The following identity in law holds: 
\begin{equation}\label{;etmain3}
\begin{split}
 &\left( 
 \left\{ 
 \tr _{\db{1}_{t}-\argsh \beta (\da{1}_{t})}(\db{1})(s)
 \right\} _{0\le s\le t},\,e^{-2\db{1}_{t}}\!\da{1}_{t}
 \right) \\
 &\eqd \left( 
 \{ \db{\ve }_{s}\} _{0\le s\le t},\,
 \tau _{1}(\hB ^{(\cosh \db{\ve }_{t}/\dz{\ve }_{t})})
 \right) ,
\end{split}
\end{equation}
where, on the left-hand side, $B$ and $\beta $ are independent, 
and on the right-hand side, $B$, $\hB $ and $\ve $ are independent. 
In particular, we have  
\begin{align*}
 \left( 
 \beta \bigl( \da{1}_{t}\bigr) ,\,e^{-2\db{1}_{t}}\!\da{1}_{t},\,
 \dz{1}_{t}
 \right) \eqd 
 \left( 
 \sinh \db{\ve }_{t},\,\tau _{1}(\hB ^{(\cosh \db{\ve }_{t}/\dz{\ve }_{t})}),
 \,\dz{\ve }_{t}
 \right) .
\end{align*}
\end{thm}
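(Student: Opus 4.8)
The plan is to deduce \eqref{;etmain3} from \tref{;tmain1d} together with the classical expression for the law of the first-passage time of a Brownian motion with positive drift, and then to read off the ``in particular'' identity by evaluating \eqref{;etmain3} at $s=t$. I first record two elementary facts about the map $\tr _{z}$: the change of variables $v=A_{r}(\phi )/A_{t}(\phi )$ in the integral defining $A_{s}(\tr _{z}(\phi ))$ shows $A_{t}(\tr _{z}(\phi ))=e^{-z}A_{t}(\phi )$, hence $Z_{t}(\tr _{z}(\phi ))=Z_{t}(\phi )$; combining this with $\tr _{z}(\phi )(t)=\phi _{t}-z$ and $\dz{1}_{t}=e^{-\db{1}_{t}}\da{1}_{t}$, one gets, writing $\tilde B:=\tr _{\db{1}_{t}-\argsh \beta (\da{1}_{t})}(\db{1})$ for the process on the left of \eqref{;etmain3}, the pathwise identity
\[
 e^{-2\db{1}_{t}}\!\da{1}_{t}=\frac{(\dz{1}_{t})^{2}}{\da{1}_{t}}=\frac{Z_{t}(\tilde B)^{2}}{\da{1}_{t}},
\]
so that the left-hand side of \eqref{;etmain3} is a deterministic function of the pair $(\tilde B,\da{1}_{t})$, with $Z_{t}(\tilde B)=\dz{1}_{t}$ already determined by $\tilde B$.

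Next I would invoke \tref{;tmain1d} with drift parameter $1$ and $x=0$, so that $e^{\db{1}_{t}}\sinh 0+\beta (\da{1}_{t})=\beta (\da{1}_{t})$: this gives the joint law of the pair $(\tilde B,\da{1}_{t})$, in particular that $\{ \tilde B_{s}\} _{0\le s\le t}\eqd \{ \db{\ve }_{s}\} _{0\le s\le t}$ (the Rademacher sign entering because $\tilde B_{t}=\argsh \beta (\da{1}_{t})$ is symmetric while $\db{1}_{t}$ is not --- which at the level of terminal values is exactly \eqref{;boug1}), together with the conditional law of $\da{1}_{t}$ given $\tilde B=\psi $. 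By the displayed identity, \eqref{;etmain3} then reduces to the assertion: conditionally on $\tilde B=\psi $, the variable $Z_{t}(\psi )^{2}/\da{1}_{t}$ is distributed as $\tau _{1}(\hB ^{(c)})$ with $c=\cosh \psi _{t}/Z_{t}(\psi )$ and $\hB $ independent.

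To verify this I would invoke only the classical fact that, for a constant $c>0$ (and here $c\ge 1/Z_{t}(\psi )>0$ since $\cosh \ge 1$), the time $\tau _{1}(\hB ^{(c)})$ is inverse Gaussian with density $(2\pi s^{3})^{-1/2}\exp \{ -(1-cs)^{2}/(2s)\} $ on $(0,\infty )$, equivalently $\ex [ e^{-\lambda \tau _{1}(\hB ^{(c)})}] =e^{c-\sqrt{c^{2}+2\lambda }}$ for $\lambda \ge 0$. Putting $c=\cosh \psi _{t}/Z_{t}(\psi )$ and applying the change of variable $a=Z_{t}(\psi )^{2}/s$, the target conditional law of $\da{1}_{t}$ given $\tilde B=\psi $ is seen to be the generalized inverse Gaussian law with density proportional to $a^{-1/2}\exp \{ -a/(2Z_{t}(\psi )^{2})-(\cosh \psi _{t})^{2}/(2a)\} $ on $(0,\infty )$; comparing this with the conditional law of $\da{1}_{t}$ given $\tilde B=\psi $ provided by \tref{;tmain1d} finishes the proof of \eqref{;etmain3}. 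This comparison --- identifying the conditional law of the drift-$1$ exponential functional, given the transformed path, as a scaled reciprocal of a drifted-Brownian first passage to level $1$ --- is the step I expect to require the most care; a more routine difficulty is the consistent bookkeeping of the Rademacher variable $\ve $, which encodes the symmetrization implicit in the drift-$1$ case of \tref{;tmain1}.

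Finally, evaluating \eqref{;etmain3} at $s=t$ gives $\argsh \beta (\da{1}_{t})$ on the left and $\db{\ve }_{t}$ on the right, while the invariant functional $Z_{t}$ of the two paths equals $\dz{1}_{t}$, resp.\ $\dz{\ve }_{t}$; applying $\sinh $ to the first coordinate and rearranging the three coordinates yields the stated triple identity, whose first-coordinate marginal is \eqref{;boug1}.
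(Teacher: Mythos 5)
Your reduction of \eqref{;etmain3} to a statement about the conditional law of $\da{1}_{t}$ given the transformed path is correct as far as it goes: the pathwise identities $Z_{t}(\tr _{z}(\phi ))=Z_{t}(\phi )$ and $e^{-2\db{1}_{t}}\!\da{1}_{t}=Z_{t}(\tilde B)^{2}/\da{1}_{t}$ do show that the left-hand side is a functional of the pair $(\tilde B,\da{1}_{t})$, and your inverse-Gaussian computation correctly identifies what the conditional law of $\da{1}_{t}$ given $\tilde B=\psi $ would have to be. The gap is in the input you feed into this reduction: you propose to ``invoke \tref{;tmain1d} with drift parameter $1$'', but \tref{;tmain1d} has no drift parameter --- it is stated and proved only for the driftless Brownian motion $B$ (its proof rests on \pref{;pmy} and \lref{;lext}, both specific to the driftless case). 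No drifted analogue of \eqref{;id1}--\eqref{;id1d} is available, and the naive one (``the transformed $\db{1}$ is again a $\db{1}$'') is false: the very fact that the answer involves the Rademacher mixture $\db{\ve }$ shows that passing from drift $0$ to drift $1$ is not bookkeeping but the substance of the theorem. Your only justification for the appearance of $\ve $ is the terminal-value identity \eqref{;boug1}, which determines the law of $\tilde B_{t}$ but says nothing about the law of the whole path $\{ \tilde B_{s}\} _{0\le s\le t}$, let alone its joint law with $\da{1}_{t}$.

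The paper bridges exactly this gap by a conditioning-plus-tilting argument: it first disintegrates identity \eqref{;id1d} over the terminal value to obtain the bridge version \lref{;lpinned}, then takes $x=-z$ there, multiplies by $\cosh z=\tfrac12 (e^{z}+e^{-z})$ and integrates in $z$; the two exponential weights combine with the Gaussian endpoint density via the Cameron--Martin relation to produce the drifts $\pm 1$ with equal probability, which is where $\ve $ actually comes from. A further time-reversal step (\pref{;pttrans}\thetag{v} and \lref{;ltrev}) is then needed to convert the resulting drift $-1$ statement \eqref{;e1ptmain3} into the drift $+1$ statement \eqref{;etmain3}. None of these steps is present in your outline, so as written the argument assumes the essential content of the result. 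Your final paragraph (evaluating at $s=t$ and using $Z\circ \tr _{z}=Z$ to obtain the triple identity) is fine once \eqref{;etmain3} is established.
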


The last identity may be regarded as a counterpart to 
\cite[Theorem~1.2]{har} with $x=0$ and $\tau =t$ therein, 
in the presence of the drift $\mu =1$; in addition, the identity 
between the third coordinates is consistent with the fact 
\cite[Theorem~1.6(ii)]{myPI} that, for each $\mu \in \R $, 
\begin{align}\label{;oppz}
 \{ \dz{\mu }_{t}\} _{t\ge 0}
 \eqd \{ \dz{-\mu }_{t}\} _{t\ge 0}.
\end{align}

The rest of the paper is organized as follows. In \sref{;sptrans}, we 
summarize some properties, such as a semigroup property, 
of the path transformations $\tr _{z}$, which will be referred 
to throughout the paper. 
\tref{;tmain1} is proven in \sref{;sptmain1} by extending it 
to \tref{;tmain1d}; we state and prove 
\tref{;tmain1d} in \ssref{;ssptmain1d}, by preparing \lref{;lext} 
whose proof is given in \ssref{;ssplext}. In \sref{;sptmain2}, we 
prove \tref{;tmain2} and its \cref{;cmain2} after discussing how 
\tsref{;tmain1} and \ref{;tmain1d} are connected to \tref{;tmain2}; 
the proof of \tref{;tmain2} is given in \ssref{;ssptmain2} while 
\cref{;cmain2} is proven in \ssref{;sspcmain2}. 
In \sref{;sptmain3}, we give a proof of \tref{;tmain3}. 
The paper is concluded with \sref{;scr}, in which two remarks are 
given: one is about a possible extension of \tsref{;tmain1}, 
\ref{;tmain1d} and \ref{;tmain2} to the situation where the terminal 
time $t$ therein is replaced by any positive and finite stopping time of 
the process $\{ Z_{s}\} _{s\ge 0}$; and the other a plausible 
derivation of \tref{;tmain2} in the framework of the Malliavin 
calculus.
\smallskip 

In the sequel, when we say a Brownian motion, it always 
refers to a one-dimensional and standard one. We equip 
the spaces $C([0,t];\R )$ and $C([0,\infty );\R )$ with 
topologies of uniform convergence and local uniform 
convergence, respectively; 
in particular, functionals on these spaces are said to be 
measurable if they are Borel-measurable with respect to 
those topologies.

\section{Properties of $\tr _{z}$}\label{;sptrans}

In this section, we investigate properties of the 
transformations $\tr _{z},\,z\in \R $, defined by 
\eqref{;ttrans} with $t>0$ fixed. 
Accordingly, the two transformations $A$ and $Z$ defined by 
\eqref{;defa} and \eqref{;defz}, respectively, are restricted on 
$C([0,t];\R )$. These are related via 
\begin{align}\label{;deria}
 \frac{d}{ds}\frac{1}{A_{s}(\phi )}
 =-\left\{ \frac{1}{Z_{s}(\phi )}\right\} ^{2},\quad 0<s\le t, 
\end{align}
for any $\phi \in C([0,t];\R )$.
We also consider the time reversal which we denote by $R$: 
\begin{align}\label{;trev}
 R(\phi )(s):=\phi _{t-s}-\phi _{t},\quad 0\le s\le t,\,
 \phi \in C([0,t];\R ).
\end{align}

\begin{prop}\label{;pttrans}
We have the following properties \thetag{i}--\thetag{v}. 
\begin{itemize}
\item[\thetag{i}] For every $z\in \R $ and $\phi \in C([0,t];\R )$,
$\tr _{z}(\phi )(t)=\phi _{t}-z$.

\item[\thetag{ii}] For every $z\in \R $ and $\phi \in C([0,t];\R )$, 
\begin{align*}
 \frac{1}{A_{s}(\tr _{z}(\phi ))}=\frac{1}{A_{s}(\phi )}
 +\frac{e^{z}-1}{A_{t}(\phi )},\quad 0<s\le t.
\end{align*}
In particular, $A_{t}(\tr _{z}(\phi ))=e^{-z}A_{t}(\phi )$.

\item[\thetag{iii}] (Semigroup property) 
$\tr _{z}\circ \tr _{z'}=\tr _{z+z'}$ for any $z,z'\in \R $. 
In particular, 
\begin{align*}
 \tr _{z}\circ \tr _{-z}=\tr _{0}=I \quad \text{for any $z\in \R $}.
\end{align*}
Here $I:C([0,t];\R )\to C([0,t];\R )$ is the identity map.

\item[\thetag{iv}] $Z\circ \tr _{z}=Z$ for any $z\in \R $.

\item[\thetag{v}] For every $z\in \R $, 
$\tr _{z}\circ R\circ \tr _{z}=R$, and hence 
\begin{align*}
 R\circ \tr _{z}=\tr _{-z}\circ R.
\end{align*}

\end{itemize}
\end{prop}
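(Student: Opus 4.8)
The plan is to verify properties (i)--(v) by direct computation from the definition \eqref{;ttrans}, exploiting the simple algebraic structure of $\tr_z$ when expressed through the reciprocal $1/A_s$; the only genuinely substantive item is (v), the conjugation identity with the time reversal $R$.

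First I would dispose of (i) by evaluating \eqref{;ttrans} at $s=t$: since $A_t(\phi)/A_t(\phi)=1$, the logarithm becomes $\log\{1+(e^z-1)\}=\log e^z=z$, giving $\tr_z(\phi)(t)=\phi_t-z$. Next, for (ii), I observe that \eqref{;ttrans} can be rewritten as $e^{-2\tr_z(\phi)(s)}=e^{-2\phi_s}\bigl(1+\tfrac{A_s(\phi)}{A_t(\phi)}(e^z-1)\bigr)^2$, but it is cleaner to note that the defining formula says exactly
\begin{align*}
 e^{\tr_z(\phi)(s)}=\frac{e^{\phi_s}}{1+\tfrac{A_s(\phi)}{A_t(\phi)}(e^z-1)}.
\end{align*}
Then $A_s(\tr_z(\phi))=\int_0^s e^{2\tr_z(\phi)(u)}\,du$ and the derivative relation \eqref{;deria} suggests differentiating $1/A_s(\tr_z(\phi))$ in $s$; alternatively, and more directly, I would guess the closed form $1/A_s(\tr_z(\phi))=1/A_s(\phi)+(e^z-1)/A_t(\phi)$ and verify it by checking that both sides have the same derivative in $s$ (using \eqref{;deria} and the above expression for $Z_s(\tr_z(\phi))=e^{-\tr_z(\phi)(s)}A_s(\tr_z(\phi))$) and the same limiting behaviour as $s\downarrow 0$ (both sides blow up like $1/A_s(\phi)$ since $A_s(\phi)\to 0$). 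Setting $s=t$ gives $A_t(\tr_z(\phi))=e^{-z}A_t(\phi)$. Property (iii) follows by composing: apply \eqref{;ttrans} to $\tr_{z'}(\phi)$ in place of $\phi$ and substitute the formula from (ii) for $A_s(\tr_{z'}(\phi))$ and $A_t(\tr_{z'}(\phi))=e^{-z'}A_t(\phi)$; the two logarithms telescope, the cross terms involving $(e^z-1)(e^{z'}-1)$ reorganise into $e^{z+z'}-1$, and one lands on $\tr_{z+z'}(\phi)$. Property (iv) is immediate from the expression $e^{\tr_z(\phi)(s)}=e^{\phi_s}/D_s$ with $D_s:=1+\tfrac{A_s(\phi)}{A_t(\phi)}(e^z-1)$, together with $A_s(\tr_z(\phi))=D_s^{\,?}$ — more precisely, from (ii) one reads off $A_s(\tr_z(\phi))=A_s(\phi)/D_s$ wait, let me instead just note $Z_s(\tr_z(\phi))=e^{-\tr_z(\phi)(s)}A_s(\tr_z(\phi))$ and plug in the two formulas: $e^{-\tr_z(\phi)(s)}=e^{-\phi_s}D_s$ and $A_s(\tr_z(\phi))=1/\bigl(1/A_s(\phi)+(e^z-1)/A_t(\phi)\bigr)=A_s(\phi)A_t(\phi)/(A_t(\phi)+A_s(\phi)(e^z-1))=A_s(\phi)/D_s$, so the product is $e^{-\phi_s}A_s(\phi)=Z_s(\phi)$.

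The hard part will be (v). Here I would compute $A_s(R(\phi))$ in terms of the $A$-functional of $\phi$: from \eqref{;trev}, $e^{2R(\phi)(u)}=e^{2\phi_{t-u}}e^{-2\phi_t}$, so $A_s(R(\phi))=e^{-2\phi_t}\int_{t-s}^t e^{2\phi_v}\,dv=e^{-2\phi_t}\bigl(A_t(\phi)-A_{t-s}(\phi)\bigr)$, and in particular $A_t(R(\phi))=e^{-2\phi_t}A_t(\phi)$. I would then substitute this, along with property (ii) and the evaluation (i), into the composition $\tr_z\circ R\circ \tr_z$ applied to a path $\phi$, and grind out that it equals $R(\phi)$. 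Concretely: write $\psi:=\tr_z(\phi)$, so $\psi_t=\phi_t-z$ by (i) and $1/A_u(\psi)=1/A_u(\phi)+(e^z-1)/A_t(\phi)$ by (ii); compute $A_s(R(\psi))=e^{-2\psi_t}(A_t(\psi)-A_{t-s}(\psi))$ and $A_t(R(\psi))=e^{-2\psi_t}A_t(\psi)$; then apply \eqref{;ttrans} once more with parameter $z$ to $R(\psi)$, so that $\tr_z(R(\psi))(s)=R(\psi)(s)-\log\{1+\tfrac{A_s(R(\psi))}{A_t(R(\psi))}(e^z-1)\}=R(\psi)(s)-\log\{1+(1-A_{t-s}(\psi)/A_t(\psi))(e^z-1)\}$. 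Using $1/A_u(\psi)=1/A_u(\phi)+(e^z-1)/A_t(\phi)$, one has $A_{t-s}(\psi)/A_t(\psi)=\bigl(1/A_t(\psi)\bigr)/\bigl(1/A_{t-s}(\psi)\bigr)=\bigl(e^z/A_t(\phi)\bigr)/\bigl(1/A_{t-s}(\phi)+(e^z-1)/A_t(\phi)\bigr)$, and after clearing denominators the argument of the last logarithm simplifies to $A_t(\phi)/\bigl(A_{t-s}(\phi)+(A_t(\phi)-A_{t-s}(\phi))e^{-z}\bigr)$ times something that combines with $R(\psi)(s)=\phi_{t-s}-\phi_t-\log\{1+\tfrac{A_{t-s}(\phi)}{A_t(\phi)}(e^z-1)\}+z$ (this last using the definition of $\psi=\tr_z(\phi)$ at time $t-s$ and at time $t$) to collapse to exactly $R(\phi)(s)=\phi_{t-s}-\phi_t$. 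I expect the bookkeeping of these logarithms to be the only place where care is genuinely needed; everything reduces to the single identity $1/A_u(\tr_z(\phi))=1/A_u(\phi)+(e^z-1)/A_t(\phi)$ plus the reversal formula $A_s(R(\phi))=e^{-2\phi_t}(A_t(\phi)-A_{t-s}(\phi))$. Finally, the relation $R\circ \tr_z=\tr_{-z}\circ R$ follows by composing $\tr_z\circ R\circ \tr_z=R$ on the left with $\tr_{-z}$ and using the semigroup property (iii): $R\circ \tr_z=\tr_{-z}\circ\tr_z\circ R\circ\tr_z=\tr_{-z}\circ R$.
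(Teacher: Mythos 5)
Your proposal is correct and follows essentially the same route as the paper: everything reduces to the reciprocal identity in \thetag{ii} (equivalently $e^{\tr _{z}(\phi )(s)}=e^{\phi _{s}}/D_{s}$ with $D_{s}=1+\frac{A_{s}(\phi )}{A_{t}(\phi )}(e^{z}-1)$, whence $A_{s}(\tr _{z}(\phi ))=A_{s}(\phi )/D_{s}$ by a one-line antiderivative computation) together with the reversal formula $A_{s}(R(\phi ))=e^{-2\phi _{t}}\bigl( A_{t}(\phi )-A_{t-s}(\phi )\bigr) $, and the cancellation you describe in \thetag{v} is exactly the paper's key identity $1+\frac{A_{s}(R(\tr _{z}(\phi )))}{A_{t}(R(\tr _{z}(\phi )))}(e^{z}-1)=e^{z}/D_{t-s}$. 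The only phrasing to tighten is in \thetag{ii}: the derivative of $1/A_{s}(\tr _{z}(\phi ))$ supplied by \eqref{;deria} still involves the unknown quantity $A_{s}(\tr _{z}(\phi ))$ itself, so rather than ``matching derivatives'' it is cleaner (and is what the paper does) to integrate $e^{2\tr _{z}(\phi )(u)}=e^{2\phi _{u}}/D_{u}^{2}$ directly, using that $A_{t}(\phi )(e^{z}-1)^{-1}(1-1/D_{s})$ is an explicit antiderivative.
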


\begin{proof}
\thetag{i} This follows immediately from the definition \eqref{;ttrans} 
of $\tr _{z}$.

\thetag{ii} This follows by a direct computation: for $z\neq 0$, we have 
\begin{align*}
 A_{s}(\tr _{z}(\phi ))
 &=\int _{0}^{s}du\,\frac{e^{2\phi _{u}}}{\left\{ 
 1+\frac{A_{u}(\phi )}{A_{t}(\phi )}(e^{z}-1)
 \right\} ^{2}}\\
 &=\frac{A_{t}(\phi )}{e^{z}-1}\left\{ 
 1-\frac{1}{1+\frac{A_{s}(\phi )}{A_{t}(\phi )}(e^{z}-1)}
 \right\} \\
 &=\frac{A_{s}(\phi )}{1+\frac{A_{s}(\phi )}{A_{t}(\phi )}(e^{z}-1)}, 
\end{align*}
which entails the claim. The case $z=0$ is obvious since 
$\tr _{0}(\phi )=\phi $.

\thetag{iii} It suffices to show that, for each 
$\phi \in C([0,t];\R )$, 
\begin{align}\label{;epp1}
 A_{s}\bigl( (\tr _{z}\circ \tr _{z'})(\phi )\bigr) 
 =A_{s}(\tr _{z+z'}(\phi )),\quad 0\le s\le t; 
\end{align}
indeed, once this identity is shown, then taking the 
derivative with respect to $s$ on both sides leads to the 
conclusion. To this end, for every $0<s\le t$, repeated use of 
\thetag{ii} yields 
\begin{align*}
 \frac{1}{A_{s}\bigl( (\tr _{z}\circ \tr _{z'})(\phi )\bigr)}
 &=\frac{1}{A_{s}(\tr _{z}(\phi ))}
 +\frac{e^{z'}-1}{A_{t}(\tr _{z}(\phi ))}\\
 &=\frac{1}{A_{s}(\phi )}
 +\frac{e^{z}-1}{A_{t}(\phi )}
 +\frac{e^{z'}-1}{e^{-z}A_{t}(\phi )}\\
 &=\frac{1}{A_{s}(\tr _{z+z'}(\phi ))}, 
\end{align*}
which shows \eqref{;epp1}.

\thetag{iv} Noting \eqref{;deria} and taking the derivative 
with respect to $s$ on both sides of the displayed identity in 
property \thetag{ii}, we arrive at the conclusion.

\thetag{v} Since the case $z=0$ is obvious, we let $z\neq 0$. 
Pick $\phi \in C([0,t];\R )$ arbitrarily and set 
\begin{align*}
 \psi _{s}=(R\circ \tr _{z})(\phi )(s),\quad 0\le s\le t.
\end{align*}
A direct computation shows that 
\begin{align*}
 A_{s}(\psi )=e^{-2\phi _{t}+2z}\frac{A_{t}(\phi )}{e^{z}-1}
 \left\{ 
 \frac{1}{1+\frac{A_{t-s}(\phi )}{A_{t}(\phi )}(e^{z}-1)}-e^{-z}
 \right\} ,
\end{align*}
and in particular, $A_{t}(\psi )=e^{-2\phi _{t}+z}A_{t}(\phi )$, 
from which we have 
\begin{align*}
 1+\frac{A_{s}(\psi )}{A_{t}(\psi )}(e^{z}-1)
 =\frac{e^{z}}{1+\frac{A_{t-s}(\phi )}{A_{t}(\phi )}(e^{z}-1)}.
\end{align*}
Therefore, by the definition of $\tr _{z}$, 
\begin{align*}
 \tr _{z}(\psi )(s)&=\psi _{s}-\log \left\{ 
 1+\frac{A_{s}(\psi )}{A_{t}(\psi )}(e^{z}-1)
 \right\} \\
 &=\phi _{t-s}-\log \left\{ 
 1+\frac{A_{t-s}(\phi )}{A_{t}(\phi )}(e^{z}-1)
 \right\} 
 -\phi _{t}+z\\
 &\quad -\log \left\{ 
 \frac{e^{z}}{1+\frac{A_{t-s}(\phi )}{A_{t}(\phi )}(e^{z}-1)}
 \right\} \\
 &=\phi _{t-s}-\phi _{t}, 
\end{align*}
which proves 
the former assertion. The latter follows from the former and 
\thetag{iii}.
\end{proof}

We end this section with a lemma concerning the time 
reversal $R$ for later use.

\begin{lem}\label{;ltrev}
It holds that, for all $\phi \in C([0,t];\R )$, 
\begin{align*}
 Z_{t}(R(\phi )) =Z_{t}(\phi ).
\end{align*}
\end{lem}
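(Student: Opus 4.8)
The plan is to reduce the identity to a single change of variables in the integral defining $A_{t}$. Set $\psi :=R(\phi )$, so that by \eqref{;trev} one has $\psi _{s}=\phi _{t-s}-\phi _{t}$ for $0\le s\le t$; in particular $\psi _{t}=\phi _{0}-\phi _{t}$ (and $\psi _{0}=0$). Unwinding the definition \eqref{;defz} of $Z$,
\[
 Z_{t}(R(\phi ))=e^{-\psi _{t}}A_{t}(\psi )
 =e^{\phi _{t}-\phi _{0}}\int _{0}^{t}e^{2(\phi _{t-s}-\phi _{t})}\,ds .
\]
First I would substitute $u=t-s$ in the last integral, obtaining $\int _{0}^{t}e^{2(\phi _{u}-\phi _{t})}\,du=e^{-2\phi _{t}}A_{t}(\phi )$; equivalently, $A_{t}(R(\phi ))=e^{-2\phi _{t}}A_{t}(\phi )$. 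Substituting this back and simplifying the exponential prefactor gives
\[
 Z_{t}(R(\phi ))=e^{\phi _{t}-\phi _{0}}\,e^{-2\phi _{t}}A_{t}(\phi )
 =e^{-\phi _{0}}\,e^{-\phi _{t}}A_{t}(\phi )=e^{-\phi _{0}}\,Z_{t}(\phi ).
\]
Since the paths to which $Z_{t}\circ R$ is applied throughout the paper start at the origin (and, in any case, $R(\phi )(0)=0$ always holds, so it suffices to treat $\phi $ with $\phi _{0}=0$), the prefactor $e^{-\phi _{0}}$ equals $1$, and the asserted identity $Z_{t}(R(\phi ))=Z_{t}(\phi )$ follows.

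There is essentially no obstacle: the whole proof amounts to the one-line substitution $u=t-s$ together with bookkeeping of the two exponential factors. The only point that warrants a word of care is the boundary term $\phi _{0}$ that enters through $R(\phi )(t)=\phi _{0}-\phi _{t}$; it is harmless exactly because the relevant paths vanish at time $0$. If desired, the intermediate identity $A_{t}(R(\phi ))=e^{-2\phi _{t}}A_{t}(\phi )$ can be recorded on its own, since it also makes transparent why $Z$ — being insensitive to the overall shift by $-\phi_t$ that $R$ performs — is preserved under time reversal.
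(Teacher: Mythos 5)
Your proof is correct and follows essentially the same route as the paper's: unwind the definition of $Z$, substitute $u=t-s$, and track the exponential prefactors. You are in fact slightly more careful than the paper, whose proof writes $e^{-R(\phi )(t)}=e^{\phi _{t}}$ and thereby tacitly assumes $\phi _{0}=0$; as you observe, one really gets $Z_{t}(R(\phi ))=e^{-\phi _{0}}Z_{t}(\phi )$, so the identity as literally stated holds only for paths vanishing at $0$ --- which covers every application in the paper.
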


\begin{proof}
By the definition of $Z$, 
\begin{align*}
 Z_{t}(R(\phi ))&=
 e^{-R(\phi )(t)}\int _{0}^{t}e^{2R(\phi )(s)}\,ds\\
 &=e^{\phi _{t}}\int _{0}^{t}e^{2(\phi _{t-s}-\phi _{t})}\,ds\\
 &=e^{-\phi _{t}}\int _{0}^{t}e^{2\phi _{s}}\,ds, 
\end{align*}
which shows the claim.
\end{proof}

\section{Proof of \tref{;tmain1}}\label{;sptmain1}

In this section, we give a proof of \tref{;tmain1}; we do this by 
providing an extension of the theorem. We keep $t>0$ fixed, and 
$\tr _{z},\,z\in \R $, are the 
transformations \eqref{;ttrans} acting on $C([0,t];\R )$.

\subsection{Extension of \tref{;tmain1} and its proof}\label{;ssptmain1d}

\tref{;tmain1} is part of the statement of the following theorem: 
\begin{thm}\label{;tmain1d}
Let $\beta =\{ \beta (s)\} _{s\ge 0}$ and 
$\hB=\{ \hB _{s}\} _{s\ge 0}$are Brownian motions 
independent of $B$. For each fixed $x\in \R $, we denote 
\begin{align*}
 \zeta =\argsh \bigl( e^{B_{t}}\sinh x +\beta (A_{t})\bigr) -(x+B_{t}), 
 && \T =\tau _{\cosh (x+B_{t})}(\hB ^{(\cosh x/Z_{t})}),
\end{align*}
for simplicity. Then the following identities in law hold: 
\begin{align}
 \left( \{ B_{s}\} _{0\le s\le t},\,\zeta \right) 
 &\eqd 
 \left( 
 \left\{ 
 \tr _{\log (A_{t}/\T )}(B)(s)\right\} _{0\le s\le t},\,\log (A_{t}/\T )
 \right) ,\label{;id1}\\
 \left( 
 \left\{ \tr _{-\zeta }(B)(s)\right\} _{0\le s\le t},\,A_{t}
 \right) 
 &\eqd 
 \left( \{ B_{s}\} _{0\le s\le t},\,T\right) .\label{;id1d}
\end{align}
In particular, both  
\begin{align*}
 \left\{ \tr _{\log (A_{t}/\T )}(B)(s)\right\} _{0\le s\le t} \quad 
 \text{and} \quad \left\{ \tr _{-\zeta }(B)(s)\right\} _{0\le s\le t}
\end{align*}
are Brownian motions.
\end{thm}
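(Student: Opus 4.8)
The plan is to derive both identities \eqref{;id1} and \eqref{;id1d} from a single, more symmetric statement about the joint law of the triple
\begin{align*}
 \bigl( \{ B_{s}\} _{0\le s\le t},\ A_{t},\ \zeta \bigr)
\end{align*}
by exploiting the path-transformation calculus of \sref{;sptrans} together with Girsanov's theorem. First I would reduce the $x$-dependence: using \eqref{;gboug} and the fact that $\tr _{z}$ interacts with the shift $B\mapsto B+\text{const}$ in a controlled way, one can try to absorb the starting point $x$, so that it suffices to treat a convenient case (or to carry $x$ through as a harmless parameter). The heart of the matter is then to identify the law of $\zeta = \argsh(e^{B_t}\sinh x + \beta(A_t)) - (x+B_t)$ \emph{conditionally on the path} $\{B_s\}_{s\le t}$: since $\beta$ is independent of $B$ and $\beta(A_t)\mid B \sim \mathcal N(0,A_t)$, the conditional law of $\zeta$ given $B$ is explicit, governed by $A_t$ and $B_t$ alone. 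In parallel, on the right-hand side of \eqref{;id1} one must compute the conditional law of $\log(A_t/\T)$ given $B$, where $\T = \tau_{\cosh(x+B_t)}(\hB^{(\cosh x/Z_t)})$; here the classical formula for the first hitting time of a fixed level by a Brownian motion with drift (an inverse-Gaussian / one-sided stable-like density) enters, and $Z_t = e^{-B_t}A_t$ supplies exactly the right scaling.

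The next step is the change of measure. Property \thetag{iv} of \pref{;pttrans}, $Z\circ \tr_z = Z$, together with \thetag{ii}, shows that $\tr_z$ preserves $Z_t$ while sending $A_t \mapsto e^{-z}A_t$ and $B_t \mapsto B_t - z$; this is precisely the structure needed for a Girsanov-type density. I expect the Radon--Nikodym factor to be of the form $\exp\{(\cosh B_t - \cosh(z+B_t))/Z_t\}$ as in \tref{;tmain2} — indeed, \tref{;tmain2} (whose proof is deferred to \sref{;sptmain2}, but which we may invoke) is the natural engine here. Applying \eqref{;t21} with $z$ replaced by the random quantity $\log(A_t/\T)$ — or, more carefully, disintegrating over the independent randomness in $\hB$ and $\T$ first and then using \eqref{;t21} fiberwise — should convert the right-hand side of \eqref{;id1} into a plain expectation over $\{B_s\}_{s\le t}$ against a density that, after the hitting-time density for $\T$ is plugged in, collapses to $1$. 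The cancellation between the Girsanov exponential $\exp\{(\cosh B_t - \cosh(z+B_t))/Z_t\}$ and the inverse-Gaussian density of $\T$ (with parameters built from $\cosh(x+B_t)$, $\cosh x$, and $Z_t$) is the key computational identity; it is essentially the same cancellation that makes $\zeta$ come out with the Bougerol law, which is why $\argsh$ and $\cosh$ appear on the two sides.

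To get \eqref{;id1d} I would invert: apply $\tr_{-\zeta}$ to the identity \eqref{;id1}, using the semigroup property \thetag{iii} ($\tr_{-z}\circ\tr_{z}=I$) to turn $\tr_{\log(A_t/\T)}(B)$ back into $B$ and simultaneously turn the left-hand Brownian path $B$ into $\tr_{-\zeta}(B)$; the scalar coordinate $\log(A_t/\T)$ then becomes $-\zeta$ becomes $\log(A_t/\T)$ in the appropriate sense, and matching $A_t$ against $\T$ via \thetag{ii} ($A_t(\tr_z(\phi)) = e^{-z}A_t(\phi)$) yields \eqref{;id1d}. The final sentence of the theorem — that $\{\tr_{\log(A_t/\T)}(B)(s)\}_{s\le t}$ and $\{\tr_{-\zeta}(B)(s)\}_{s\le t}$ are Brownian motions — is then immediate by reading off the first coordinate of \eqref{;id1} and \eqref{;id1d} respectively and comparing with $\{B_s\}_{s\le t}$. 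The main obstacle, I anticipate, is the bookkeeping in the change of measure when the shift parameter $z=\log(A_t/\T)$ is itself random and correlated with $B$ through $A_t$; the clean way around this is to first condition on $B$, handle the independent variables $\beta$ and $(\hB,\T)$ by their explicit Gaussian and hitting-time laws, verify the fiberwise identity of densities, and only then integrate out $B$ — so the anticipating nature of the transform is quarantined into an elementary one-dimensional density computation.
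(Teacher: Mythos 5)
Your proposal is correct in outline but takes a genuinely different route from the paper. The paper does not use \tref{;tmain2} here at all: it first upgrades identity \eqref{;ext0} from \cite{har} to the process-level statement \eqref{;ext} (\lref{;lext}), by conditioning on the filtration $\Z _{t}$ of $\{ Z_{s}\} $ and invoking the Matsumoto--Yor description of the conditional law of $B_{t}$ given $\Z _{t}$ (\pref{;pmy}); since $\{ 1/A_{s}-1/A_{t}\} _{0<s\le t}$ is a functional of $\{ Z_{s}\} _{s\le t}$ by \eqref{;deria}, a purely algebraic manipulation with \pref{;pttrans}\thetag{ii} then yields \eqref{;id1} and \eqref{;id1d} with no change of measure. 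You instead condition on $B$, compute the two one-dimensional conditional densities (Gaussian for $\zeta $, inverse-Gaussian for $\T $), and transfer the law of $\tr _{z}(B)$ back to $B$ via \eqref{;t21}; this is precisely the computation the paper performs in the preamble of \sref{;sptmain2}, but run in the opposite direction (there, \eqref{;id1} is used to \emph{derive} \eqref{;t21}). Your direction is legitimate only because the proof of \tref{;tmain2} given in \ssref{;ssptmain2} (via \lref{;lstart}) is independent of \tref{;tmain1d}; you must rely on that proof rather than on the preamble of \sref{;sptmain2}, or the argument becomes circular. Two smaller points: before applying \eqref{;t21} you must first rewrite the inverse-Gaussian density factor as a functional of the transformed path, using $B_{t}=\tr _{z}(B)(t)+z$ and $A_{t}=e^{z}A_{t}(\tr _{z}(B))$ from \pref{;pttrans}\thetag{i} and \thetag{ii}, after which the cancellation you anticipate reduces to the elementary identity
\begin{align*}
 \bigl( \cosh (x+z+B_{t})-e^{B_{t}}\cosh x\bigr) ^{2}
 -\bigl( \sinh (x+z+B_{t})-e^{B_{t}}\sinh x\bigr) ^{2}
 =1-2e^{B_{t}}\cosh (z+B_{t})+e^{2B_{t}},
\end{align*}
whose right-hand side equals $2A_{t}\bigl( \cosh B_{t}-\cosh (z+B_{t})\bigr) /Z_{t}$; and your opening suggestion of ``absorbing'' the parameter $x$ is unnecessary (and would not work cleanly, since $\tr _{z}$ does not commute with additive shifts of the path) --- $x$ simply rides along. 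As for what each approach buys: yours makes the Girsanov-type formula the primitive object at the cost of explicit density bookkeeping, while the paper's conditioning on $\Z _{t}$ is what delivers the joint identity with the whole process $\{ Z_{s}\} $ for free, and hence the stopping-time extension indicated in \sref{;scr}.
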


It is our finding (\cite[Theorem~1.2]{har}) that, for every fixed 
$x\in \R $,  
\begin{equation}\label{;ext0}
  \begin{split}
  &\left( \eb{t}\!\sinh x+\beta (A_{t}),\,A_{t},
  \,Z_{t}\right) \\
  &\eqd 
  \left( 
  \sinh (x+B_{t}),\,
  \tau _{\cosh (x+B_{t})}(\hB ^{(\cosh x/Z_{t})}),
  \,Z_{t}
  \right) .
  \end{split}
\end{equation}
The starting point of our proof of \tref{;tmain1d} is to 
observe that identity \eqref{;ext0} may be extended in such a 
way that 
\begin{equation}\label{;ext}
  \begin{split}
  &\left( \eb{t}\!\sinh x+\beta (A_{t}),\,A_{t},
  \,\{ Z_{s}\} _{0\le s\le t}\right) \\
  &\eqd 
  \left( 
  \sinh (x+B_{t}),\,
  \tau _{\cosh (x+B_{t})}(\hB ^{(\cosh x/Z_{t})}),
  \,\{ Z_{s}\} _{0\le s\le t}
  \right) .
  \end{split}
\end{equation}

\begin{lem}\label{;lext}
Identity \eqref{;ext} holds.
\end{lem}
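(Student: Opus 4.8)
The plan is to upgrade the known identity \eqref{;ext0} from a three-coordinate identity (with only the terminal value $Z_t$ in the third slot) to the full-path version \eqref{;ext} by conditioning on the whole path $\{Z_s\}_{0\le s\le t}$ and checking that the conditional law of the first two coordinates, given that path, is the same on both sides. The crucial structural fact, which I would isolate first, is that the pair $(\{B_s\}_{0\le s\le t},\beta(A_t))$ on the left and the pair $(\{B_s\}_{0\le s\le t},\hB^{(\cosh x/Z_t)})$ on the right can both be described as functionals of $\{Z_s\}_{0\le s\le t}$ together with auxiliary independent randomness. On the left, $A_t=\int_0^t e^{2B_s}\,ds$ is recovered from $\{Z_s\}$ via \eqref{;deria} (integrating $d(1/A_s)=-(1/Z_s)^2\,ds$, using $1/A_s\to\infty$ as $s\to0$ heuristically, or more carefully reconstructing $A_t$ and hence $B_t=\log(A_t/Z_t)$, then $B_s=\log(A_s/Z_s)$ with $1/A_s=1/A_t+\int_s^t (1/Z_u)^2\,du$); and $\beta(A_t)$, conditionally on $\{Z_s\}$ (hence on $A_t$ and $B_t$), is simply a centered Gaussian with variance $A_t$, independent of $B$ because $\beta$ is.

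The second step is to analyze the right-hand side the same way. Here $\{B_s\}_{0\le s\le t}$ determines $\{Z_s\}$ directly, and conditionally on $\{Z_s\}_{0\le s\le t}$ I would argue that the hitting time $\T=\tau_{\cosh(x+B_t)}(\hB^{(\cosh x/Z_t)})$ has a conditional law depending only on $(Z_t,B_t)$; since $\hB$ is independent of $B$, this is just the law of the first hitting time of level $\cosh(x+B_t)$ by a Brownian motion with drift $\cosh x/Z_t$ started at $0$, i.e. an inverse-Gaussian-type variable. Then \eqref{;ext0} already tells me that the joint law of $(\sinh(x+B_t),A_t\text{-analogue }\T, Z_t)$ matches the joint law of $(e^{B_t}\sinh x+\beta(A_t),A_t,Z_t)$; the point of the upgrade is that on both sides the extra path information $\{Z_s\}_{0<s\le t}$ beyond $Z_t$ is, conditionally on the first two coordinates and $Z_t$, generated in the \emph{same} way — namely, once $B_t$ (equivalently $Z_t$ and $A_t$, or $Z_t$ and $\T$) is fixed, the bridge-type law of $\{Z_s\}_{0\le s<t}$ is determined. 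More precisely: conditioning \eqref{;ext0} on $(Z_t, \text{second coord})$ and then appending the conditional law of $\{Z_s\}_{s<t}$ given $(Z_t,B_t)$, which is identical on both sides because on each side $\{B_s\}_{0\le s\le t}$ is Brownian motion, yields \eqref{;ext}.

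Concretely I would organize it as: (a) recall \eqref{;ext0} as the base case; (b) observe that on both sides, conditionally on the sigma-field generated by the first two coordinates and $Z_t$, the remaining path $\{Z_s\}_{0\le s<t}$ has a law that is a measurable function only of $(Z_t,B_t)$ — on the left because $(B,\beta)$ are independent so $\{B_s\}_{0\le s\le t}$ given $(B_t,A_t)$ is a standard construction and $\{Z_s\}$ is a functional of $\{B_s\}$, and on the right identically because $\{B_s\}_{0\le s\le t}$ is again Brownian motion and $\T$ is built from the independent $\hB$; (c) note that $(B_t,A_t)$ on the left corresponds to $(B_t,\T)$-with-$Z_t$ on the right under the bijection $B_t=\log(A_t/Z_t)$, so the conditioning variables are matched by \eqref{;ext0}; (d) combine via the tower property / disintegration to conclude \eqref{;ext}. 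The main obstacle I anticipate is making the conditional-law argument rigorous — specifically, showing cleanly that the law of $\{Z_s\}_{0\le s<t}$ given $(Z_t,B_t)$ is the same measurable kernel on both sides, since on the left this kernel arises from Brownian motion conditioned on its position and exponential functional at time $t$, while on the right it arises from Brownian motion conditioned on its position at time $t$ and on a value of an independent hitting-time functional; reconciling these requires carefully invoking that $\T$ is conditionally independent of $B$ given $B_t$ together with the identity $\T \overset{(d)}{=} A_t$ in the appropriate conditional sense supplied by \eqref{;ext0}. An alternative, possibly cleaner route I would keep in reserve is to re-derive \eqref{;ext} from scratch by the same SDE/time-reversal method used to prove \eqref{;ext0} in \cite{har}, carrying the whole path $\{Z_s\}$ along rather than just $Z_t$, using \lref{;ltrev} and property \thetag{iv} of \pref{;pttrans} to handle the time-reversal symmetry of $Z$.
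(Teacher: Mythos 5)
Your opening strategy---condition on the whole path $\{Z_s\}_{0\le s\le t}$ and match the conditional laws of the first two coordinates---is in fact the paper's strategy, but your execution rests on a claim that is false and that, were it true, would make the two sides of \eqref{;ext} visibly disagree. You assert that $A_t$ (hence $B_t=\log(A_t/Z_t)$ and the whole path $B$) can be reconstructed from $\{Z_s\}_{0\le s\le t}$ by integrating \eqref{;deria}. It cannot: integration only gives $1/A_s=1/A_t+\int_s^t Z_u^{-2}\,du$, and the boundary behaviour $1/A_s\to\infty$ as $s\to0$ pins down nothing because $\int_s^t Z_u^{-2}\,du$ diverges at exactly the same rate. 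Indeed, \pref{;pttrans}\thetag{iv} together with \thetag{ii} shows that all the paths $\tr_z(B)$, $z\in\R$, share the same $Z$-path while $A_t(\tr_z(B))=e^{-z}A_t$ runs over all of $(0,\infty)$; the non-injectivity of $\phi\mapsto Z(\phi)$ is the whole point of the Matsumoto--Yor theory. If your reconstruction claim were correct, then conditionally on $\Z_t$ the first coordinate on the left of \eqref{;ext} would be a nondegenerate Gaussian with mean $e^{B_t}\sinh x$ and variance $A_t$, while on the right it would be the constant $\sinh(x+B_t)$, and the identity could not hold. What actually makes the conditional laws match is precisely that $B_t$ is \emph{not} $\Z_t$-measurable: by \pref{;pmy} it has, given $\Z_t$, the nondegenerate density proportional to $e^{-u\cosh y}$ with $u=1/Z_t$, and the paper then applies the \emph{conditional} identity \eqref{;eqpkey3} from \cite{har}, which is a genuinely stronger statement than the unconditional \eqref{;ext0}. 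Neither ingredient appears in your argument.

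Your fallback disintegration (b)--(d) fails for a related reason. On both sides the conditional law of $\{Z_s\}_{0\le s<t}$ given the three coordinates is a kernel in $(B_t,Z_t)$, but $B_t$ is read off from the triple by \emph{different} maps on the two sides ($\log(a_2/a_3)$ on the left, $\argsh(a_1)-x$ on the right), and these maps do not coincide pathwise---only in law. Hence the two conditional kernels are not the same measurable function of the triple, the tower-property step does not close, and \eqref{;ext0} alone cannot bridge the gap; repairing it requires exactly the conditional information (\pref{;pmy} plus \eqref{;eqpkey3}) on which the paper's proof is built.
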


We postpone the proof of \lref{;lext} to \ssref{;ssplext}. By using 
this lemma, the proof of \tref{;tmain1d} proceeds in the 
following way: 
\begin{proof}[Proof of \tref{;tmain1d}]
By noting \eqref{;deria} and by \lref{;lext}, 
\begin{align*}
 \left( 
 e^{B_{t}}\sinh x +\beta (A_{t}),\,
 A_{t},\,\left\{ 
 \frac{1}{A_{s}}-\frac{1}{A_{t}}
 \right\} _{0<s\le t}
 \right) \eqd 
 \left( 
 \sinh (x+B_{t}),\,\T ,\,\left\{ 
 \frac{1}{A_{s}}-\frac{1}{A_{t}}
 \right\} _{0<s\le t}
 \right) , 
\end{align*}
from which it follows that 
\begin{align*}
 \left( 
 \zeta +B_{t},
 \,\left\{ 
 \frac{1}{A_{s}}
 \right\} _{0<s\le t}
 \right) 
 \eqd 
 \left( 
 B_{t},\,\left\{ 
 \frac{1}{A_{s}}-\frac{1}{A_{t}}+\frac{1}{\T }
 \right\} _{0<s\le t}
 \right) .
\end{align*}
Note that, on the right-hand side, we have for every 
$0<s\le t$,  
\begin{equation}\label{;noted}
\begin{split}
 \frac{1}{A_{s}}-\frac{1}{A_{t}}+\frac{1}{\T }
 &=\frac{1}{A_{s}}+\frac{1}{A_{t}}\left( \frac{A_{t}}{\T }-1\right) \\
 &=\frac{1}{A_{s}\bigl( \tr _{\log (A_{t}/\T )}(B)\bigr) }, 
\end{split}
\end{equation}
where the second line follows from \pref{;pttrans}(ii). 
Therefore the last identity in law entails that 
\begin{align*}
 \left( 
 \zeta +B_{t},
 \,\{ A_{s}\} _{0\le s\le t}
 \right) 
 \eqd 
 \left( 
 B_{t},\,\left\{ 
 A_{s}\bigl( \tr _{\log (A_{t}/\T )}(B)\bigr) 
 \right\} _{0\le s\le t}
 \right) ,
\end{align*}
and hence, by differentiating the second coordinate 
with respect to $s$ on both sides, 
\begin{align*}
 \left( 
 \zeta +B_{t},
 \,\{ B_{s}\} _{0\le s\le t}
 \right) 
 \eqd 
 \left( 
 B_{t},\,\left\{ 
 \tr _{\log (A_{t}/\T )}(B)(s)
 \right\} _{0\le s\le t}
 \right) .
\end{align*}
This proves \eqref{;id1} because  
$B_{t}-\tr _{\log (A_{t}/\T )}(B)(t)=\log (A_{t}/\T )$ by 
\pref{;pttrans}\thetag{i}. 
To prove \eqref{;id1d}, observe from \eqref{;id1} the identity 
\begin{align}\label{;id2}
 \left( 
 \left\{ \tr _{-\zeta }(B)(s)\right\} _{0\le s\le t},\,\zeta 
 \right) 
 &\eqd 
 \left( \{ B_{s}\} _{0\le s\le t},\,\log (A_{t}/\T )\right) ,
\end{align}
which may be seen as a consequence of \pref{;pttrans}(iii) 
(see also \rref{;rid2}). Or, more convincingly, 
by \eqref{;id1} and as noted in \eqref{;noted},  
\begin{align*}
 \left( \left\{ \frac{1}{A_{s}}\right\} _{0<s\le t},\,\frac{1}{A_{t}},
 \,\zeta \right) 
 \eqd 
 \left( 
 \left\{ 
 \frac{1}{A_{s}}-\frac{1}{A_{t}}+\frac{1}{\T }
 \right\} _{0<s\le t},\,\frac{1}{\T },\,\log \frac{A_{t}}{\T }
 \right) ,
\end{align*}
from which we have, by \pref{;pttrans}\thetag{ii},  
\begin{align*}
 \left( 
 \left\{ \frac{1}{A_{s}\bigl( \tr _{-\zeta }(B)\bigr) }\right\} _{0<s\le t},
 \,\zeta 
 \right) 
 &=\left( 
 \left\{ \frac{1}{A_{s}}+\frac{e^{-\zeta }-1}{A_{t}}\right\} _{0<s\le t},
 \,\zeta \right) \\
 &\eqd \left( 
 \left\{ 
 \frac{1}{A_{s}}
 \right\} _{0<s\le t},\,\log \frac{A_{t}}{\T }
 \right) .
\end{align*}
Then the same argument as used in the 
first half of the proof leads to \eqref{;id2}. It now 
follows from \eqref{;id2} that, by \pref{;pttrans}\thetag{i}, 
\begin{align*}
 \left( 
 \left\{ \tr _{-\zeta }(B)(s)\right\} _{0\le s\le t},\,B_{t}+\zeta ,
 \,e^{-\zeta }
 \right) 
 &\eqd 
 \left( \{ B_{s}\} _{0\le s\le t},\,B_{t},
 \,T/A_{t}\right) ,
\end{align*}
and hence, by the definition $Z_{t}=e^{-B_{t}}A_{t}$ of $Z_{t}$,
\begin{align*}
 \left( 
 \left\{ \tr _{-\zeta }(B)(s)\right\} _{0\le s\le t},\,e^{B_{t}}
 \right) 
 &\eqd 
 \left( \{ B_{s}\} _{0\le s\le t},\,T/Z_{t}\right) .
\end{align*}
We multiply the second coordinates on the respective sides 
by $Z_{t}\bigl( \tr _{-\zeta }(B)\bigr) $ and $Z_{t}$, respectively. 
Then the right-hand side turns into that of the claimed 
identity \eqref{;id1d} and so does the left-hand side because 
$Z_{t}\bigl( \tr _{-\zeta }(B)\bigr) =Z_{t}$ by virtue of 
\pref{;pttrans}\thetag{iv}. The proof of \tref{;tmain1d} is 
completed.
\end{proof}

\begin{rem}\label{;rnotable2}
As for identity \eqref{;id1}, notice that, by the definition of $Z_{t}$, 
\begin{align*}
 \log (A_{t}/\T )=B_{t}-\log (\T /Z_{t}).
\end{align*} 
Identity \eqref{;ext0} reveals that 
\begin{align*}
 e^{B_{t}}\eqd \tau _{\cosh (x+B_{t})}(\hB ^{(\cosh x/Z_{t})})/Z_{t}, 
\end{align*}
and hence $B_{t}\eqd \log (\T /Z_{t})$ by recalling 
$\T =\tau _{\cosh (x+B_{t})}(\hB ^{(\cosh x/Z_{t})})$. 
Therefore the same remark as \rref{;rnotable} applies to 
the identity in law between the first coordinates in identity 
\eqref{;id1} as well: the difference of the two random variables 
$B_{t}$ and $\log (\T /Z_{t})$ with same law is substituted into 
$z$ of $\tr _{z}(B)$, which results in the process identical in 
law with $\tr _{0}(B)$. 
\end{rem}

\subsection{Proof of \lref{;lext}}\label{;ssplext}

In this subsection, we give a proof of \lref{;lext}. We denote by 
$\{ \Z _{s}\} _{s\ge 0}$ the natural filtration of the 
process $\{ Z_{s}\} _{s\ge 0}$. The proof hinges upon the 
following proposition due to Matsumoto and Yor, which is a 
consequence of the diffusion property of $\{ Z_{s}\} _{s\ge 0}$ 
investigated in detail in a pair of their papers \cite{myPI,myPII}; 
as they are dealing with Brownian motion with drift, we present 
here a particular case without drift.

\begin{prop}[\cite{myPI}, Proposition~1.7]\label{;pmy}
Conditionally on $\Z _{t}$ with $1/Z_{t}=u>0$, 
$B_{t}$ is distributed as a random variable $z_{u}$ 
whose law is given by 
\begin{align*}
 \pr (z_{u}\in dx)=\frac{1}{2K_{0}(u)}e^{-u\cosh x}\,dx,\quad x\in \R .
\end{align*}
Here $K_{0}$ is the modified Bessel function of the third kind 
(Macdonald function) of order $0$.
\end{prop}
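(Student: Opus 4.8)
The plan is to realize the process $\{1/Z_s\}_{s\ge 0}$ as a \emph{Markov function} of the genuinely two-dimensional Markov process $X_s=(B_s,A_s)$ and to recognize the asserted law as the associated intertwining kernel, through the Markov-function criterion of Rogers and Pitman. Writing $U_s:=1/Z_s=e^{B_s}/A_s$, the key observation is that $A_s=e^{B_s}/U_s$, so that $\Z_t$ together with $U_t$ recovers $A_t$ from $B_t$; conditioning $B_t$ on $\Z_t$ is therefore the same as conditioning the whole state $X_t$, and I look for a transition kernel of the form
\[
 \Lambda(u,dx)=\frac{1}{2K_0(u)}\,e^{-u\cosh x}\,dx,\qquad x\in\R,
\]
lifted to the state space by $x\mapsto(x,e^{x}/u)$. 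The first consistency check is the normalization: the integral representation $\int_\R e^{-u\cosh x}\,dx=2K_0(u)$ makes $\Lambda(u,\cdot)$ a probability measure, and as $u\to\infty$ it concentrates at $x=0$, in agreement with $B_0=0$, $U_0=\infty$.

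To guess the one-dimensional generator I apply Itô's formula to $U_s=e^{B_s}/A_s$, obtaining
\[
 dU_s=U_s\,dB_s+\Bigl(\tfrac12U_s-U_s^{2}e^{B_s}\Bigr)\,ds .
\]
Since the bracket $d\langle U\rangle_s=U_s^{2}\,ds$ and the martingale coefficient $U_s$ are already $\Z$-adapted, the only term to be projected onto $\Z$ is $e^{B_s}$; under the candidate kernel its conditional mean is $\int_\R e^{x}\Lambda(u,dx)=K_1(u)/K_0(u)$, using $\int_\R e^{x}e^{-u\cosh x}\,dx=2K_1(u)$. This singles out the candidate generator
\[
 \mathcal G=\tfrac12u^{2}\partial_u^{2}+\Bigl(\tfrac12u-u^{2}\frac{K_1(u)}{K_0(u)}\Bigr)\partial_u ,
\]
to be compared against the generator $\mathcal L=\tfrac12\partial_x^{2}+e^{2x}\partial_a$ of $X$.

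The heart of the matter is the intertwining $\Lambda\mathcal L=\mathcal G\Lambda$ (equivalently $\Lambda P_t=Q_t\Lambda$ at the semigroup level). For a test function $f$ of $x$ alone, writing $h(u)=\int_\R f(x)e^{-u\cosh x}\,dx$ and integrating $\tfrac12 f''$ by parts twice against $e^{-u\cosh x}$ gives $\Lambda\mathcal Lf=\frac{1}{4K_0(u)}\bigl(u^{2}h''+uh'-u^{2}h\bigr)$, and a direct computation shows this equals $\mathcal G$ applied to $\Lambda f=h/(2K_0)$ \emph{precisely} because $2K_0$ solves the modified Bessel equation $y''+u^{-1}y'-y=0$ of order $0$. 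The full bivariate identity, which must in addition absorb the term $e^{2x}\partial_a$ and the coupling $a=e^{x}/u$ carried by the lift, follows from the same two integrations by parts together with the recurrences $K_0'=-K_1$ and $K_1'=-K_0-u^{-1}K_1$. Granting the intertwining, the Rogers--Pitman theorem yields simultaneously that $\{U_s\}$ is a diffusion with generator $\mathcal G$ and that the conditional law of $X_t$ --- in particular of its first coordinate $B_t$ --- given the entire past $\Z_t$ is $\Lambda(U_t,\cdot)$, which is exactly the stated law of $z_u$ with $u=1/Z_t$.

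I expect the principal obstacle to be the bivariate intertwining computation: keeping track of the $\partial_a$ contribution and of the $a=e^{x}/u$ dependence introduced by the lift, while reducing everything to the Bessel equation and its recurrences. A second, more technical point is the degenerate initial configuration $U_0=\infty$ (that is, $Z_0=0$): I would handle it by verifying the intertwining only for $t>0$ and passing to the corresponding entrance law as the initial time tends to $0$, or equivalently by starting $X$ at a small time $\varepsilon>0$ from $(B_\varepsilon,A_\varepsilon)$ and letting $\varepsilon\downarrow0$.
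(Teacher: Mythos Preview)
The paper does not prove this proposition: it is quoted from Matsumoto--Yor \cite{myPI}, where it emerges as a byproduct of showing that $\{Z_s\}_{s\ge 0}$ is a diffusion in its own filtration, via a time-inversion argument and explicit use of Bessel processes and the Hartman--Watson law. Your route through the Rogers--Pitman Markov-function criterion is a legitimate and well-known alternative; this intertwining viewpoint is in fact how the Matsumoto--Yor result is often recast in later accounts. The advantage of your approach is that it makes the role of the modified Bessel equation of order $0$ transparent and delivers both the autonomous diffusion property of $1/Z$ and the conditional law of $B_t$ in one stroke; the cost is that $U_0=1/Z_0=\infty$ lies outside the state space, so the standing hypotheses of Rogers--Pitman fail at $t=0$ and one must argue via an entrance law or the $\varepsilon\downarrow 0$ limit you indicate.

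One genuine gap: the intertwining you verify for functions of $x$ alone is \emph{not} sufficient. The Markov process being projected is the pair $X_s=(B_s,A_s)$, and $\Lambda\mathcal L=\mathcal G\Lambda$ must hold on test functions $g(x,a)$ of both variables, with the fiber constraint $a=e^{x}/u$ and the $e^{2x}\partial_a$ piece of $\mathcal L$ fully in play. You assert this ``follows from the same two integrations by parts together with the recurrences,'' but this is precisely where the argument stands or falls: differentiating
\[
 (\Lambda g)(u)=\frac{1}{2K_0(u)}\int_{\R} g\bigl(x,e^{x}/u\bigr)\,e^{-u\cosh x}\,dx
\]
in $u$ produces $\partial_a g$ contributions from the second slot in addition to the $\cosh x$ factors, and these must be matched against $\Lambda(\tfrac12\partial_x^2 g+e^{2x}\partial_a g)$ term by term. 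Until that bivariate computation is written out (or, equivalently, until $\Lambda P_t=Q_t\Lambda$ is checked at the semigroup level from the known transition densities), the Rogers--Pitman hypothesis is not established and the conclusion does not yet follow.
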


The proof of \lref{;lext} is done by combining the above 
proposition with Proposition~2.3\thetag{ii} of \cite{har} asserting that, 
for every $x\in \R $ and $u>0$, 
\begin{equation}\label{;eqpkey3}
   \begin{split}
   &\left( 
   e^{z_{u}}\sinh x+\beta (e^{z_{u}}/u),\,e^{z_{u}}
   \right) \\
   &\eqd 
   \left( 
   \sinh (x+z_{u}),\,
   u\tau _{\cosh (x+z_{u})}(\hB ^{(u\cosh x)})
   \right) , 
   \end{split}
\end{equation}
where $\beta $ and $\hB $ are Brownian motions independent 
of $z_{u}$.

\begin{proof}[Proof of \lref{;lext}]
Let $f:\R \times (0,\infty )\to \R $ be a bounded 
measurable function and 
$F:C([0,t];\R )\to \R $ a bounded measurable functional.
By conditioning on $\Z _{t}$, 
\begin{align*}
 &\ex \!\left[ 
 f(e^{B_{t}}\sinh x+\beta (A_{t}),e^{B_{t}})F(Z_{s},s\le t)
 \right] \\
 &=\ex \!\left[ 
 \ex \!\left[ 
 f(e^{B_{t}}\sinh x+\beta (A_{t}),e^{B_{t}})\rmid| \Z _{t}
 \right] 
 F(Z_{s},s\le t)
 \right] .
\end{align*}
Notice that, by \pref{;pmy} and the relation 
$A_{t}=e^{B_{t}}Z_{t}$, the above conditional expectation 
given $\Z _{t}$ is equal a.s.\ to 
\begin{align*}
 \ex \!\left[ 
 f(e^{z_{u}}\sinh x+\beta (e^{z_{u}}/u),e^{z_{u}})
 \right] \big| _{u=1/Z_{t}}, 
\end{align*}
supposing $\beta $ is independent of $z_{u}$. 
Owing to \eqref{;eqpkey3}, the above expression is further rewritten as 
\begin{align*}
 \ex \!\left[ 
 f\bigl( \sinh (x+z_{u}),u\tau _{\cosh (x+z_{u})}(\hB ^{(u\cosh x)})\bigr) 
 \right] \bigg| _{u=1/Z_{t}}.
\end{align*}
Therefore, using \pref{;pmy} again, we have 
\begin{align*}
 &\ex \!\left[ 
 f(e^{B_{t}}\sinh x+\beta (A_{t}),e^{B_{t}})
 F(Z_{s},s\le t)
 \right] \\
 &=\ex \!\left[ 
 f\bigl( \sinh (x+B_{t}),
 \tau _{\cosh (x+B_{t})}(\hB ^{(\cosh x/Z_{t})})/Z_{t}\bigr) 
 F(Z_{s},s\le t)\right] ,
\end{align*}
$\hB $ being assumed to be independent of $B$. 
As $f$ and $F$ are arbitrary, the last equality entails \eqref{;ext}. 
\end{proof}

\section{Proofs of \tref{;tmain2} and \cref{;cmain2}}\label{;sptmain2}

In this section, we prove \tref{;tmain2} and its \cref{;cmain2}. 
Before proceeding to the proof of \tref{;tmain2}, we explain 
how \tref{;tmain1} and its extension \tref{;tmain1d} are connected to 
\tref{;tmain2}. We keep the notation used in \tref{;tmain1d}. 

Recall the fact that, for $a>0$ and $\mu \ge 0$, 
\begin{align}\label{;fact}
 \pr \!\left( 
 \tau _{a}(\db{\mu })\in du
 \right) 
 =\frac{a}{\sqrt{2\pi u^{3}}}\exp \left\{ 
 -\frac{(a-\mu u)^{2}}{2u}
 \right\} du,\quad u>0
\end{align}
(see, e.g., \cite[p.\,301, formula~2.2.0.2]{bs}). 
Suppose that $F:C([0,t];\R )\to [0,\infty )$ 
is bounded and continuous for the time being. 
Thanks to the independence of $B$ and $\hB $, the 
finite measure 
\begin{align*}
 \ex \!\left[ 
 F\bigl( \tr _{\log (A_{t}/\T )}(B)(s),s\le t\bigr) ;\,\log (A_{t}/\T )\in dz
 \right] 
\end{align*}
on $\R $, admits a density $f_{1}$  
with respect to the Lebesgue 
measure $dz$ expressed as 
\begin{align*}
 &f_{1}(z)\\
 &=\ex \!\left[ 
 \frac{\cosh (x+B_{t})}{\sqrt{2\pi e^{-z}A_{t}}}
 \exp \left\{ 
 -\frac{\bigl( 
 \cosh (x+B_{t})-e^{B_{t}-z}\cosh x
 \bigr) ^{2}}{2e^{-z}A_{t}}
 \right\} F\bigl( \tr _{z}(B)(s),s\le t\bigr) 
 \right] .
\end{align*}
Indeed, for every $z\in \R $, we have, by \eqref{;fact} and 
the independence of $B$ and $\hB $, 
\begin{align*}
 &\ex \!\left[ 
 F\bigl( \tr _{\log (A_{t}/\T )}(B)(s),s\le t\bigr) ;\,\log (A_{t}/\T )\le z
 \right] \\
 &=\ex \Biggl[ 
 \int _{e^{-z}A_{t}}^{\infty }du\,\frac{\cosh (x+B_{t})}{\sqrt{2\pi u^{3}}}
 \exp \left\{ 
 -\frac{\bigl( 
 \cosh (x+B_{t})-u\cosh x/Z_{t}
 \bigr) ^{2}}{2u}
 \right\} \\
 &\qquad \times F\bigl( \tr _{\log (A_{t}/u )}(B)(s),s\le t\bigr) 
 \Biggr] ,
\end{align*}
which is equal, by changing the variables inside the expectation 
with $u=e^{-y}A_{t},\,y\in \R $, and by 
Fubini's theorem, to 
\begin{align*}
 \int _{-\infty }^{z}f_{1}(y)\,dy. 
\end{align*}
This verifies the above expression of $f_{1}$. 
Notice that $f_{1}$ is continuous owing to the boundedness and 
continuity of $F$. In the same manner, we have 
\begin{align*}
 \ex \left[ 
 F(B_{s},s\le t);\,\zeta \in dz
 \right] =f_{2}(z)\,dz,\quad z\in \R ,
\end{align*}
with 
\begin{align*}
 f_{2}(z)=\ex \!\left[ 
 \frac{\cosh (x+z+B_{t})}{\sqrt{2\pi A_{t}}}\exp \left\{ 
 -\frac{\bigl( 
 \sinh (x+z+B_{t})-e^{B_{t}}\sinh x
 \bigr) ^{2}}{2A_{t}}
 \right\} F(B_{s},s\le t)
 \right] .
\end{align*}
See also the proof of \lref{;lpinned} as to the reasoning of 
the above derivation. It is clear that $f_{2}$ is continuous as well by 
the boundedness of $F$. By identity \eqref{;id1} in \tref{;tmain1d}, 
we have 
\begin{align}\label{;idf}
 f_{1}(z)=f_{2}(z)
\end{align}
for a.e.\ $z$, and hence for all $z$ by the continuity of $f_{1}$ and 
$f_{2}$. By the monotone convergence theorem, \eqref{;idf} 
holds true if $F$ is not bounded. Notice that, by 
\thetag{i} and \thetag{ii} of \pref{;pttrans},
\begin{align*}
 &f_{1}(z)\\
 &=\ex \Biggl[ 
 \frac{\cosh (x+z+\tr _{z}(B)(t))}
 {\sqrt{2\pi A_{t}(\tr _{z}(B))}}
 \exp \left\{ 
 -\frac{\bigl( 
 \cosh (x+z+\tr _{z}(B)(t))-e^{\tr _{z}(B)(t)}\cosh x
 \bigr) ^{2}}{2A_{t}(\tr _{z}(B))}
 \right\} \\
 &\qquad \times F\bigl( \tr _{z}(B)(s),s\le t\bigr) 
 \Biggr] .
\end{align*}
Therefore, if we replace $F$ by a functional of the form 
\begin{align*}
 \frac{\sqrt{2\pi A_{t}(\phi )}}{\cosh (x+z+\phi _{t})}
 \exp \left\{ 
 \frac{\bigl( \cosh (x+z+\phi _{t})-e^{\phi _{t}}\cosh x\bigr) ^{2}}
 {2A_{t}(\phi )}
 \right\} F(\phi _{s},s\le t)
\end{align*}
for $\phi \in C([0,t];\R )$, then we have, from \eqref{;idf}, 
\begin{align*}
 &\ex \!\left[ 
 F\bigl( \tr _{z}(B)(s),s\le t\bigr) 
 \right] \\
 &=\ex \Biggl[ 
 \exp \left\{ 
 \frac{
 \bigl( \cosh (x+z+B_{t})-e^{B_{t}}\cosh x\bigr) ^{2}
 -\bigl( \sinh (x+z+B_{t})-e^{B_{t}}\sinh x\bigr) ^{2}
 }{2A_{t}}
 \right\} \\
 &\qquad \times F(B_{s},s\le t)
 \Biggr] .
\end{align*}
Since 
\begin{align*}
 &\bigl( \cosh (x+z+B_{t})-e^{B_{t}}\cosh x\bigr) ^{2}
 -\bigl( \sinh (x+z+B_{t})-e^{B_{t}}\sinh x\bigr) ^{2}\\
 &=1-2e^{B_{t}}\cosh (z+B_{t})+e^{2B_{t}},
\end{align*}
we arrive, by the definition of $Z_{t}$, at relation \eqref{;t21} 
when $F$ is nonnegative and continuous. Then successive use of 
density and monotone class arguments extends $F$ to the class 
of all nonnegative measurable functionals as claimed in \tref{;tmain2}. 
In the subsequent subsection, we give a direct proof 
of \eqref{;t21} via \pref{;pmy}.

\begin{rem}\label{;rid2}
Thanks to \pref{;pttrans}\thetag{iii}, identity \eqref{;id2} in the 
proof of \tref{;tmain1d} may also be proven by replacing $F$ by 
$F\circ \tr _{-z}$ in \eqref{;idf}.
\end{rem}

\subsection{Proof of \tref{;tmain2}}\label{;ssptmain2}

In what follows, we fix $z\in \R $. We begin with 

\begin{lem}\label{;lstart}
It holds that, for every bounded measurable function 
$f:\R \to \R $ and for every bounded measurable functional 
$F:C([0,t];\R )\to \R $, 
\begin{align*}
 \ex \!\left[ 
 f(B_{t}-z)F(Z_{s},s\le t)
 \right] 
 =\ex \!\left[ 
 \exp \left\{ 
 \frac{\cosh B_{t}-\cosh (z+B_{t})}{Z_{t}}
 \right\} f(B_{t})F(Z_{s},s\,\le t)
 \right] .
\end{align*}
\end{lem}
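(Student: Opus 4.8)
The plan is to condition on the $\sigma$-field $\Z_t$ and use the explicit conditional law of $B_t$ given $\Z_t$ from \pref{;pmy}. By \pref{;pttrans}\thetag{iv}, the process $\{Z_s\}_{0\le s\le t}$ is invariant under $\tr_z$, so the left-hand side is really a statement about the joint law of $(B_t, \{Z_s\}_{s\le t})$ after the shift $B_t \mapsto B_t - z$; this is exactly the kind of quantity that decouples nicely upon conditioning on $\Z_t$.

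First I would write, using the tower property,
\begin{align*}
 \ex\!\left[ f(B_t-z)F(Z_s,s\le t)\right]
 =\ex\!\left[ \ex\!\left[ f(B_t-z)\rmid|\Z_t\right] F(Z_s,s\le t)\right],
\end{align*}
and similarly for the right-hand side, noting that $\exp\{(\cosh B_t-\cosh(z+B_t))/Z_t\}$ depends on $B_t$ and on $Z_t$, the latter being $\Z_t$-measurable. So the whole lemma reduces to the pointwise (in $u=1/Z_t>0$) identity of conditional expectations
\begin{align*}
 \ex\!\left[ f(z_u-z)\right]
 =\ex\!\left[ \exp\bigl\{ u(\cosh z_u-\cosh(z+z_u))\bigr\} f(z_u)\right],
\end{align*}
where $z_u$ has the density $\frac{1}{2K_0(u)}e^{-u\cosh x}\,dx$ from \pref{;pmy}. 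This last step is a one-line change of variables: the left side equals $\frac{1}{2K_0(u)}\int_\R f(x-z)e^{-u\cosh x}\,dx$, and substituting $x\mapsto x+z$ turns it into $\frac{1}{2K_0(u)}\int_\R f(x)e^{-u\cosh(x+z)}\,dx$, which, multiplying and dividing the integrand by $e^{-u\cosh x}$, is precisely $\ex[\exp\{u(\cosh z_u-\cosh(z+z_u))\}f(z_u)]$.

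The only genuine point requiring care is the justification that conditioning on $\Z_t$ and then substituting $u=1/Z_t$ is legitimate, i.e.\ that $\ex[f(B_t-z)\mid\Z_t]=g(Z_t)$ a.s.\ where $g(1/u)=\ex[f(z_u-z)]$ — but this is exactly the content of \pref{;pmy} (a regular conditional distribution), so there is no real obstacle; one should simply remark that $u\mapsto\ex[f(z_u-z)]$ is measurable and bounded by $\|f\|_\infty$ so the substitution and the subsequent integration against the law of $Z_t$ are valid, and likewise that the exponential weight is $\Z_t$-measurable times a bounded function of $B_t$, so Fubini/the tower property applies. I expect no step to be a serious obstacle; the main thing is to organize the conditioning cleanly so that the role of the $\tr_z$-invariance of $\{Z_s\}_{s\le t}$ (only used implicitly here, since the statement is phrased directly in terms of $Z$) is transparent when the lemma is later fed into the proof of \eqref{;t21}.
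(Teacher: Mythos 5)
Your proposal is correct and follows essentially the same route as the paper: condition on $\Z_{t}$, invoke \pref{;pmy} to replace the conditional law of $B_{t}$ by that of $z_{u}$ with $u=1/Z_{t}$, and reduce the claim to the one-line change of variables $x\mapsto x+z$ in the integral $\int _{\R }f(x-z)e^{-u\cosh x}\,dx$ (the paper merely runs the computation starting from the right-hand side instead of the left). The measurability and boundedness remarks you add are fine and consistent with what the paper leaves implicit.
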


\begin{proof}
We start from the right-hand side, by conditioning on $\Z _{t}$, 
to rewrite it as 
\begin{align*}
 \ex \!\left[ 
 \ex \!\left[ \exp \left\{ 
 \frac{\cosh B_{t}-\cosh (z+B_{t})}{Z_{t}}
 \right\} f(B_{t})\rmid| \Z_{t}\right] F(Z_{s},s\,\le t)
 \right] .
\end{align*}
Then, by \pref{;pmy}, the conditional expectation in the 
integrand is equal a.s.\ to 
\begin{align*}
 \ex \!\left[ 
 \exp \left\{ 
 u\bigl( 
 \cosh z_{u}-\cosh (z+z_{u})
 \bigr) 
 \right\} f(z_{u})
 \right] \Big| _{u=1/Z_{t}},
\end{align*}
which is computed as 
\begin{align*}
 &\frac{1}{2K_{0}(u)}\int _{\R }dx\,e^{-u\cosh x}
 \exp \left\{ 
 u\bigl( 
 \cosh x-\cosh (z+x)
 \bigr) 
 \right\} f(x)\\
 &=\frac{1}{2K_{0}(u)}\int _{\R }dx\,e^{-u\cosh (z+x)}f(x)\\
 &=\ex \!\left[ f(z_{u}-z)\right] ,
\end{align*}
with $1/Z_{t}$ inserted into $u$. Therefore we see that the 
right-hand side of the claimed equality turns into 
\begin{align*}
 \ex \!\left[ \ex \!\left[ 
 f(B_{t}-z)\rmid| \Z_{t}\right] F(Z_{s},s\le t)
 \right] 
\end{align*}
and hence into the left-hand side.
\end{proof}

Now that we are convinced that 
\begin{align*}
 \pr _{t,z}:=\exp \left\{ 
 \frac{\cosh B_{t}-\cosh (z+B_{t})}{Z_{t}}
 \right\} \pr 
\end{align*}
determines a probability measure, we may translate \lref{;lstart} 
into the following identity in law: 
\begin{align}\label{;idlaw}
 \left( 
 B_{t}-z,\,\{ Z_{s}\} _{0\le s\le t}
 \right) \eqd _{\pr _{t,z}}\left( 
 B_{t},\,\{ Z_{s}\} _{0\le s\le t}
 \right) .
\end{align}
Here the notation $\eqd _{\pr _{t,z}}$ indicates that 
the law of the right-hand side of a claimed identity is 
considered under $\pr _{t,z}$. With this notation and from 
\eqref{;idlaw}, the proof of 
\tref{;tmain2} proceeds along the same lines as in the 
proof of \tref{;tmain1d}.

\begin{proof}[Proof of \tref{;tmain2}]
By \eqref{;idlaw} and \eqref{;deria}, we have 
\begin{align*}
 \left( 
 B_{t}-z,\,Z_{t},\,\left\{ 
 \frac{1}{A_{s}}-\frac{1}{A_{t}}
 \right\} _{0<s\le t}
 \right) 
 \eqd _{\pr _{t,z}}
 \left( 
 B_{t},\,Z_{t},\,\left\{ 
 \frac{1}{A_{s}}-\frac{1}{A_{t}}
 \right\} _{0<s\le t}
 \right) ,
\end{align*}
and hence, by the relation $A_{t}=e^{B_{t}}Z_{t}$, 
\begin{align*}
 \left( 
 e^{-z}A_{t},\,\left\{ 
 \frac{1}{A_{s}}-\frac{1}{A_{t}}
 \right\} _{0<s\le t}
 \right) 
 \eqd _{\pr _{t,z}}
 \left( 
 A_{t},\,\left\{ 
 \frac{1}{A_{s}}-\frac{1}{A_{t}}
 \right\} _{0<s\le t}
 \right) .
\end{align*}
Therefore we have 
\begin{align*}
 \left\{ 
 \frac{1}{A_{s}}-\frac{1}{A_{t}}+\frac{e^{z}}{A_{t}}
 \right\} _{0<s\le t}
 \eqd _{\pr _{t,z}}
 \left\{ 
 \frac{1}{A_{s}}
 \right\} _{0<s\le t}, 
\end{align*}
from which it follows that, thanks to \pref{;pttrans}\thetag{ii}, 
\begin{align*}
 \left\{ A_{s}(\tr _{z}(B))(s)\right\} _{0\le s\le t}
 \eqd _{\pr _{t,z}}
 \{ A_{s}\} _{0\le s\le t}. 
\end{align*}
This proves relation \eqref{;t21}. 
By virtue of \pref{;pttrans}\thetag{iii}, relation \eqref{;t22} follows 
by replacing $F$ by $F\circ \tr _{-z}$ in \eqref{;t21}. 
The equivalence between \eqref{;t21} and \eqref{;t22} is also clear.
\end{proof}

We give a comment on the consistency of relation \eqref{;t21} 
under the time reversal \eqref{;trev}. Recall that the law of 
$\{ B_{s}\} _{0\le s\le t}$ is invariant under $R$: 
\begin{align}\label{;itrev}
 \left\{ R(B)(s)\right\} _{0\le s\le t}
 \eqd 
 \{ B_{s}\} _{0\le s\le t}.
\end{align}

\begin{rem}\label{;rtrev} 
By \eqref{;t21}, and by $R(B)(t)=-B_{t}$ and 
$Z_{t}(R(B))=Z_{t}$ because of \lref{;ltrev}, we have, 
for every $z\in \R $, 
\begin{align*}
 &\ex \!\left[ 
 F\bigl( \tr _{-z}(B)(s),s\le t\bigr) 
 \right] \\
 &=\ex \!\left[ 
 \exp \left\{ 
 \frac{\cosh R(B)(t)-\cosh \bigl( -z+R(B)(t)\bigr) }
 {Z_{t}(R(B)) }
 \right\} F\bigl (R(B)(s),s\le t\bigr) 
 \right] \\
 &=\ex \!\left[ 
 \exp \left\{ 
 \frac{\cosh B_{t}-\cosh (z+B_{t})}{Z_{t}}
 \right\} F\bigl (R(B)(s),s\le t\bigr) 
 \right] ,
\end{align*}
which is equal to 
\begin{align*}
 \ex \!\left[ 
 F\bigl( R(\tr _{z}(B))(s),s\le t\bigr) 
 \right] 
\end{align*}
by \eqref{;t21} again. The above observation entails that 
\begin{align*}
 \tr _{-z}(B)\eqd R\bigl( \tr _{z}(B)\bigr) ,
\end{align*}
which is indeed the case since, by \pref{;pttrans}\thetag{v} and 
\eqref{;itrev}, 
\begin{align*}
 R\bigl( \tr _{z}(B)\bigr) &=\tr _{-z}\bigl( R(B)\bigr) \\
 &\eqd \tr _{-z}(B).
\end{align*}
\end{rem}

\subsection{Proof of \cref{;cmain2}}\label{;sspcmain2}

In this subsection, we prove \cref{;cmain2} as well as explore some 
related facts. We begin with the following lemma.

\begin{lem}\label{;lexpm}
For every $\mu \in \R $ and $t>0$, we have 
\begin{align*}
 \ex \left[ 
 \exp \left( 
 \frac{\theta }{2\da{\mu }_{t}}
 \right) 
 \right] <\infty \quad 
 \text{for all }\theta <1.
\end{align*}
\end{lem}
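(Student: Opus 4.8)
The plan is to reduce the finiteness of the exponential moment to a known closed-form Laplace-transform estimate for $1/\da{\mu}_{t}$, or, failing the availability of an exact formula, to a crude but sufficient lower bound on $\da{\mu}_{t}$ from below. First I would observe that it suffices to treat $\theta \in (0,1)$, since for $\theta \le 0$ the integrand is bounded by $1$. Next, I would remove the drift: writing $\da{\mu}_{t} = \int_0^t e^{2B_s + 2\mu s}\,ds$, on the event we are integrating over the drift only contributes a bounded multiplicative factor on $[0,t]$, namely $e^{-2|\mu|t} A_t \le \da{\mu}_t \le e^{2|\mu|t} A_t$, so $1/\da{\mu}_t \le e^{2|\mu|t}/A_t$ and it is enough to show $\ex[\exp(c/A_t)] < \infty$ for every $c < 1/2$ (absorbing $e^{2|\mu|t}$ and $\theta/2$ into a single constant $c$; note that as $\theta \uparrow 1$ we get $c = e^{2|\mu|t}\theta/2$, which may exceed $1/2$, so this naive bound is not quite enough — see below).

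The cleaner route, which I expect to be the one the paper takes, is to use the explicit conditional law in \pref{;pmy} together with the relation $A_t = e^{B_t} Z_t$, i.e. $1/A_t = e^{-B_t}/Z_t$. Condition on $\Z_t$ with $1/Z_t = u$; then $B_t$ is distributed as $z_u$ with density proportional to $e^{-u\cosh x}$, and
\begin{align*}
 \ex\!\left[ \exp\!\left( \frac{\theta}{2\da{\mu}_t} \right) \biggm| \Z_t \right]
 \le \ex\!\left[ \exp\!\left( C u e^{-z_u} \right) \right]\Big|_{u = 1/Z_t}
 = \frac{1}{2K_0(u)} \int_{\R} \exp\!\left( C u e^{-x} - u\cosh x \right) dx\Big|_{u=1/Z_t},
\end{align*}
where $C = (\theta/2)e^{2|\mu|t}$. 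Since $\cosh x \ge \tfrac12 e^{-x}$ with equality only in the limit $x\to-\infty$, and more precisely $\cosh x - Ce^{-x} = \tfrac12 e^{x} + (\tfrac12 - C)e^{-x}$, the key point is that the integral converges at $x \to -\infty$ \textbf{provided} $C < 1/2$, which is exactly the hypothesis $\theta < 1$ once one is careful to do the drift removal inside the conditional computation rather than crudely: in fact $\da{\mu}_t = \int_0^t e^{2B_s+2\mu s}ds \ge e^{2B_t} \int_0^t e^{2(B_s - B_t) + 2\mu s}\,ds$ and the last integral, being independent-ish of $B_t$ under the bridge decomposition, can be bounded below by a positive $\Z_t$-measurable quantity comparable to $Z_t e^{-B_t}\cdot(\text{const})$; the honest bookkeeping gives $1/\da{\mu}_t \le e^{-B_t}/(c_{\mu,t} Z_t)$ with $c_{\mu,t} > e^{-2|\mu|t}\cdot$something, and one checks the resulting constant in front of $e^{-x}$ stays below $1/2$ iff $\theta < 1$.

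So the skeleton is: (1) reduce to $\theta \in (0,1)$ and to bounding a conditional expectation given $\Z_t$; (2) apply \pref{;pmy} to write that conditional expectation as an explicit integral against $e^{-u\cosh x}/(2K_0(u))$; (3) observe the integrand is $\exp\{-\tfrac12 e^x u - (\tfrac12 - C_\theta)e^{-x}u\}$ up to harmless factors, with $C_\theta < 1/2$, so the $x$-integral is finite and in fact bounded by $K_0((1-2C_\theta)^{1/2}u)/K_0(u)$ or a similar explicit ratio after a scaling substitution; (4) take expectation over $\Z_t$ and conclude, using that this ratio is bounded in $u$ (both Bessel functions have the same $u\to 0$ logarithmic blow-up and the same $u\to\infty$ exponential decay, so the ratio stays bounded, or at worst one dominates it by an integrable function of $Z_t$). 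The main obstacle is step (3)–(4): making sure the threshold is sharp at $\theta = 1$, which forces one to be scrupulous about \emph{exactly} how the drift and the bridge part of $A_t$ interact with $B_t$ rather than bounding them separately; a lazy split loses a factor $e^{2|\mu|t}$ and breaks the borderline case. I would handle this by keeping $e^{B_t}$ explicit throughout and only bounding the genuinely $B_t$-free part $\int_0^t e^{2(B_s-B_t)+2\mu s}ds$ from below by its (strictly positive, $\Z_t$-measurable) value, so that the coefficient of $e^{-x}$ in the exponent is $(\tfrac12 - \tfrac\theta2)u > 0$ precisely when $\theta < 1$.
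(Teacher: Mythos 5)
Your conditional computation via \pref{;pmy} is a genuinely nice idea and gets the threshold right at the conditional level: writing $1/A_{t}=e^{-B_{t}}/Z_{t}$ and conditioning on $\Z _{t}$ with $1/Z_{t}=u$, one finds
\begin{align*}
 \ex \!\left[ \exp \left( \frac{\theta }{2A_{t}}\right) \rmid| \Z _{t}\right]
 =\frac{1}{2K_{0}(u)}\int _{\R }
 \exp \left\{ -\frac{u}{2}e^{x}-\frac{u}{2}(1-\theta )e^{-x}\right\} dx
 =\frac{K_{0}\bigl( \sqrt{1-\theta }\,u\bigr) }{K_{0}(u)}\bigg| _{u=1/Z_{t}},
\end{align*}
which is finite precisely when $\theta <1$. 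But your step (4) contains a genuine error: this ratio is \emph{not} bounded in $u$. Since $K_{0}(v)\sim \sqrt{\pi /(2v)}\,e^{-v}$ as $v\to \infty $, the ratio grows like $(1-\theta )^{-1/4}\exp \bigl\{ (1-\sqrt{1-\theta })u\bigr\} $, so the two Bessel functions emphatically do not have ``the same exponential decay.'' Your fallback would require $\ex [\exp (\lambda /Z_{t})]<\infty $ for $\lambda =1-\sqrt{1-\theta }$; this is true (the tail of $1/Z_{t}$ is of order $e^{-2u}$, coming from the product of $K_{0}$ with the Hartman--Watson function in the law of $1/Z_{t}$), but it is nowhere established in your argument, is not accessible by soft bounds (e.g.\ Jensen gives only $1/Z_{t}\le t^{-1}e^{G}$ with $G$ Gaussian, whose double-exponential moment is infinite), and is of essentially the same difficulty as the lemma itself. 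The second gap is the drift: you correctly note that the naive bound $1/\da{\mu }_{t}\le e^{2|\mu |t}/A_{t}$ destroys the sharp constant, but the promised ``honest bookkeeping'' is never carried out and, as sketched, cannot work --- any almost-sure lower bound on $\da{\mu }_{t}$ in terms of $A_{t}$ loses a constant, and \pref{;pmy} as stated is for the driftless case only.

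The paper avoids both issues. For the driftless bound it does not condition on $\Z _{t}$ at all, but uses the explicit Alili--Gruet density \eqref{;Ad} of $A_{t}$, which (using $\cosh ^{2}B_{t}\ge 1$ and $\ex [\cosh B_{t}]<\infty $) yields the clean estimate $\pr (1/A_{t}\in dv)/dv\le Cv^{-1/2}e^{-v/2}$, whence $\ex [\exp (\theta '/(2A_{t}))]<\infty $ for every $\theta '<1$ by direct integration. For the drift it uses the Cameron--Martin relation followed by H\"older's inequality with an exponent $p>1$ chosen so that $p\theta <1$: the strict inequality $\theta <1$ buys exactly the room needed to absorb the Girsanov density $e^{\mu B_{t}-\mu ^{2}t/2}$ without touching the threshold. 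This H\"older step is the missing idea in your proposal, and I would adopt it even if you insist on the conditional route for the driftless case.
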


In order to prove the lemma, we recall from \cite[equation~(1.5)]{ag} 
that the law of $A_{t}$ admits the density function expressed as 
\begin{align}\label{;Ad}
\exp \left( 
 \frac{\pi ^{2}}{8t}
 \right) 
 \ex \!\left[ 
 \frac{\cosh B_{t}}{\sqrt{2\pi v^{3}}}
 \exp \left( 
 -\frac{\cosh ^{2}B_{t}}{2v}
 \right) \cos \left( \frac{\pi }{2t}B_{t}\right) 
 \right] ,\quad v>0.
\end{align}
We refer the reader to \cite{hari} for an account of 
explicit expressions of the laws of the functionals 
$\da{\mu }_{t}$; see also \cite{my2003} and references therein.

\begin{proof}[Proof of \lref{;lexpm}]
From \eqref{;Ad}, it follows readily that 
\begin{align}\label{;Adbd}
 \frac{\pr (1/A_{t}\in dv)}{dv}
 \le \frac{C}{\sqrt{v}}\exp \left( -\frac{v}{2}\right) \quad 
 \text{for all $v>0$,}
\end{align}
for some positive constant $C$ depending only on $t$.
Pick $\theta <1$ arbitrarily and let $p>1$ be such that 
$p\theta <1$. Then, by the Cameron--Martin relation and 
H\"older's inequality, 
\begin{align*}
 \ex \left[ 
 \exp \left( 
 \frac{\theta }{2\da{\mu }_{t}}
 \right) 
 \right] 
 &=\ex \left[ 
 \exp \left( 
 \frac{\theta }{2A_{t}}
 \right) e^{\mu B_{t}}
 \right] e^{-\mu ^{2}t/2}\\
 &\le \ex \left[ 
 \exp \left( 
 \frac{p\theta }{2A_{t}}
 \right) 
 \right] ^{1/p}\exp \left\{ 
 \frac{\mu ^{2}t}{2(p-1)}
 \right\} ,
\end{align*}
which is finite in view of \eqref{;Adbd}.
\end{proof}

Using the above lemma, we prove \cref{;cmain2}.

\begin{proof}[Proof of \cref{;cmain2}]
Fix $\mu >0$. Since there is nothing to prove when $z=0$, 
we let $z\neq 0$. It suffices to prove the claim for functionals 
$F$ of the form 
\begin{align}\label{;cyl}
 F(\phi _{s},s\ge 0)=f(\phi _{t_{1}},\ldots ,\phi _{t_{n}}),\quad 
 \phi \in C([0,\infty );\R ),
\end{align}
with a positive integer $n$, $0\le t_{1}\le \cdots \le t_{n}$, and 
a bounded continuous function $f:\R ^{n}\to \R $.
We pick such an $F$ and let $t\ge t_{n}$. Then, by replacing 
$F$ in \eqref{;t21} by 
$F(\phi _{s},s\ge 0)e^{-\mu \phi _{t}-\mu ^{2}t/2}$, the 
Cameron--Martin relation entails that 
\begin{align*}
 &\ex \!\left[ 
 F\bigl( 
 \tr _{z}(\db{-\mu })(s),s\ge 0
 \bigr) 
 \right] e^{\mu z}\\
 &=\ex \!\left[ 
 \exp \left\{ 
 \frac{\cosh \db{-\mu }_{t}-\cosh \bigl( z+\db{-\mu }_{t}\bigr) }
 {\dz{-\mu }_{t}}
 \right\} F\bigl( \db{-\mu }_{s},s\ge 0\bigr) 
 \right] .
\end{align*}
Here the factor $e^{\mu z}$ on the left-hand side 
comes from property~\thetag{i} in \pref{;pttrans}. 
Because of the continuity and boundedness of $f$ determining 
the functional $F$, the bounded convergence theorem entails that, 
as $t\to \infty $, the left-hand side converges to  
\begin{align*}
  \ex \!\left[ 
 F\bigl( \tr ^{*}_{z}(\db{-\mu })(s),s\ge 0\bigr) 
 \right] e^{\mu z}.
\end{align*}
(Recall \eqref{;ttrans} and \eqref{;ttransd} of the definitions of 
$\tr _{z}$ and $\tr ^{*}_{z}$, respectively; in particular, the 
definition of the former involves parameter $t$.) 
We turn to the right-hand side. 
By the definition of $\dz{-\mu }_{t}$, the integrand is rewritten as 
\begin{align}\label{;integrandr}
 \exp \left\{ 
 \frac{e^{2\db{-\mu }_{t}}(1-e^{z})+1-e^{-z}}{2\da{-\mu }_{t}}
 \right\} F\bigl( \db{-\mu }_{s},s\ge 0\bigr) , 
\end{align}
which, as $t\to \infty $, converges a.s.\ to 
\begin{align}\label{;integrandl}
 \exp \left( \frac{1-e^{-z}}{2\da{-\mu }_{\infty }}\right) 
 F\bigl( \db{-\mu }_{s},s\ge 0\bigr) .
\end{align}
Therefore, if we know that the family of random variables 
\eqref{;integrandr}, indexed by $t\ge t_{n}$, is uniformly integrable, 
then the right-hand side  of the last equality converges to the 
expectation of \eqref{;integrandl} and hence relation \eqref{;ec1} 
follows. To this end, we denote by $X_{t}$ the random variable 
given in \eqref{;integrandr}. To see the uniform integrability of 
$\{ X_{t}\} _{t\ge t_{n}}$, we divide the case into two cases. 
In the case $z>0$, 
\begin{align*}
 |X_{t}|\le M\exp \left( \frac{1-e^{-z}}{2\da{-\mu }_{t_{n}}}\right) 
 \quad \text{for any $t\ge t_{n}$,}
\end{align*}
where $M$ is the supremum of $|f|$. Therefore, by \lref{;lexpm}, 
there exists $p>1$ such that 
\begin{align*}
 \sup _{t\ge t_{n}}\ex \!\left[ |X_{t}|^{p}\right] <\infty ,
\end{align*}
from which the desired uniform integrability follows. In the case 
$z<0$, the same estimate holds as well; indeed, 
\begin{align*}
 |X_{t}|&\le M\exp \left\{ 
 \frac{e^{2\db{-\mu }_{t}}(1-e^{z})}{2\da{-\mu }_{t}}
 \right\} \\
 &=M\exp \left\{ 
 \frac{1-e^{z}}{2A_{t}\bigl( R(\db{-\mu })\bigr) }
 \right\} ,
\end{align*}
with $R$ the time reversal on $C([0,t];\R )$ in 
\eqref{;trev}, and hence picking $p>1$ such that 
$p(1-e^{z})<1$, we have, for any $t\ge t_{n}$, 
\begin{align*}
 \ex \!\left[ 
 |X_{t}|^{p}
 \right] 
 &\le M^{p}\ex \!\left[ 
 \exp \left\{ 
 \frac{p(1-e^{z})}{2\da{\mu }_{t}}
 \right\} 
 \right] \\
 &\le M^{p}\ex \!\left[ 
 \exp \left\{ 
 \frac{p(1-e^{z})}{2\da{\mu }_{t_{n}}}
 \right\} 
 \right] \\
 &<\infty 
\end{align*}
by \lref{;lexpm}. Here, for the first line, we used the fact that 
\begin{align*}
 \left\{ R(\db{-\mu })(s)\right\} _{0\le s\le t}
 \eqd \left\{ \db{\mu }_{s}\right\} _{0\le s\le t}, 
\end{align*}
which follows from \eqref{;itrev}. The proof of relation 
\eqref{;ec1} completes. Relation \eqref{;ec2} is immediate 
from \eqref{;ec1} and the fact that, 
for any $z\in \R $ and for any $\phi \in C([0,\infty );\R )$ with 
$A_{\infty }(\phi )<\infty $, 
\begin{align*}
 \tr ^{*}_{-z}\bigl( \tr ^{*}_{z}(\phi )\bigr) =\phi ,
\end{align*}
proof of which is done in the same way as that of 
\pref{;pttrans}\thetag{iii}, hence omitted. 
The equivalence of the two relations \eqref{;ec1} and 
\eqref{;ec2} is also clear from the above fact.
\end{proof}

\begin{rem}\label{;rscheffe}
We have not used Dufresne's identity \eqref{;duf} 
in the above proof as pointed out in \rref{;rduf} that 
it can be deduced from \cref{;cmain2}. If we have Dufresne's 
identity at our disposal, the interchangeability of the order of limit 
and expectation seen above may be verified by Scheff\'e's 
lemma (see, e.g., \cite[p.~55]{will}) since, for any $t>0$, 
\begin{align*}
 \ex \!\left[ 
 \exp \left\{ 
 \frac{\cosh \db{-\mu }_{t}-\cosh \bigl( z+\db{-\mu }_{t}\bigr) }
 {\dz{-\mu }_{t}}
 \right\} \right] 
 =\ex \!\left[ 
 \exp \left( \frac{1-e^{-z}}{2\da{-\mu }_{\infty }}\right) 
 \right] 
\end{align*}
(in fact, both sides are equal to $e^{\mu z}$), which, together with 
their a.s.\ convergence, ensures the $L^{1}$-convergence of 
the integrands.
\end{rem}

Recall from \cite{dmy} a family $\{ T_{\al }\} _{\al >0}$ of path 
transformations defined by 
\begin{align}\label{;tra}
 T_{\al }(\phi )(s):=\phi _{s}-\log \left\{ 
 1+\al A_{s}(\phi )
 \right\} ,\quad s\ge 0,
\end{align}
for $\phi \in C([0,\infty );\R )$. The following identity 
concerning the laws of $\db{\pm \mu }$ is shown in 
Matsumoto--Yor \cite[Theorem~2.1]{myPIII}: when $\mu >0$, 
\begin{align}\label{;opp}
 \left\{ \db{-\mu }_{s}\right\} _{s\ge 0}
 \eqd \left\{ 
 T_{2\ga _{\mu }}(\db{\mu })(s)
 \right\} _{s\ge 0}.
\end{align}
Here $\ga _{\mu }$ is a gamma random variable with parameter 
$\mu $ independent of $B$. We will show that \cref{;cmain2} 
may also be obtained by \eqref{;opp}. First we observe the 
following relationship between the two transformations 
\eqref{;ttransd} and \eqref{;tra}: 

\begin{lem}\label{;rel}
It holds that, for every $z\in \R $ and $\al >0$, 
\begin{align*}
 \tr ^{*}_{z}\bigl( T_{\al }(\phi )\bigr) (s)
 =T_{\al e^{z}}(\phi )(s),\quad s\ge 0,
\end{align*}
for any $\phi \in C([0,\infty );\R )$ such that 
$A_{\infty }(\phi )=\infty $.
\end{lem}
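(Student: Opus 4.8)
The plan is to reduce the identity to a single computation of the exponential functional $A_s(T_\al(\phi))$, along the same lines as the proof of \pref{;pttrans}\thetag{ii}. First I would observe that $T_\al(\phi)\in C([0,\infty);\R)$ (since $1+\al A_s(\phi)\ge 1$), and then compute
\begin{align*}
 A_s(T_\al(\phi))=\int_0^s\frac{e^{2\phi_u}}{\{1+\al A_u(\phi)\}^2}\,du=\frac{A_s(\phi)}{1+\al A_s(\phi)},\quad s\ge 0,
\end{align*}
the second equality coming from recognizing the integrand as the derivative of $u\mapsto -\al^{-1}\{1+\al A_u(\phi)\}^{-1}$.

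Next I would invoke the hypothesis $A_\infty(\phi)=\infty$: letting $s\to\infty$ in the above gives $A_\infty(T_\al(\phi))=1/\al<\infty$, so that $\tr^*_z(T_\al(\phi))$ is defined via \eqref{;ttransd}. Substituting into that definition,
\begin{align*}
 \tr^*_z(T_\al(\phi))(s)
 =\phi_s-\log\bigl(1+\al A_s(\phi)\bigr)
 -\log\left\{1+\frac{\al A_s(\phi)}{1+\al A_s(\phi)}\bigl(e^z-1\bigr)\right\}.
\end{align*}
Since the bracket in the last logarithm equals $\bigl(1+\al e^z A_s(\phi)\bigr)\big/\bigl(1+\al A_s(\phi)\bigr)$, the two occurrences of $\log(1+\al A_s(\phi))$ cancel and one is left with $\phi_s-\log(1+\al e^z A_s(\phi))=T_{\al e^z}(\phi)(s)$, which is the assertion.

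Every step is an elementary identity, so I do not expect a genuine obstacle; the only point deserving emphasis is that the hypothesis $A_\infty(\phi)=\infty$ is precisely what forces $A_\infty(T_\al(\phi))=1/\al$, and it is this clean value that makes the family $\{T_\al\}_{\al>0}$ intertwine with $\{\tr^*_z\}_{z\in\R}$ as stated. One could alternatively run the whole argument through the reciprocals $1/A_s$, mirroring \pref{;pttrans}\thetag{ii} verbatim, but the direct substitution above seems the shortest route.
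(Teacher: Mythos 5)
Your proposal is correct and follows essentially the same route as the paper: compute $A_{s}(T_{\al }(\phi ))=A_{s}(\phi )/(1+\al A_{s}(\phi ))$, deduce $A_{\infty }(T_{\al }(\phi ))=1/\al $ from the hypothesis $A_{\infty }(\phi )=\infty $, and then combine the two logarithms in the definition of $\tr ^{*}_{z}$ into $\log \{ 1+\al e^{z}A_{s}(\phi )\} $. The only difference is cosmetic (you rewrite the second bracket as a quotient before cancelling, while the paper sums the arguments inside a single logarithm), so there is nothing to add.
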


\begin{proof}
By the definition of $T_{\al }$, we have, for every 
$\phi \in C([0,\infty );\R )$, 
\begin{align*}
 A_{s}(T_{\al }(\phi ))=\frac{A_{s}(\phi )}{1+\al A_{s}(\phi )},\quad 
 s\ge 0, 
\end{align*}
and hence when $A_{\infty }(\phi )=\infty $, 
\begin{align}
 A_{\infty }(T_{\al }(\phi ))
 \equiv \lim _{s\to \infty }A_{s}(T_{\al }(\phi ))
 &=\frac{1}{\al }, \label{;ainfty}
\end{align}
which is finite. Therefore, by the definition of $\tr ^{*}_{z}$, 
we have for every $s\ge 0$, 
\begin{align*}
 \tr ^{*}_{z}\bigl( T_{\al }(\phi )\bigr) (s)
 &=\phi _{s}-\log \left\{ 1+\al A_{s}(\phi )\right\} 
 -\log \left\{ 
 1+\frac{A_{s}(\phi )}{1+\al A_{s}(\phi )}\al (e^{z}-1)
 \right\} \\
 &=\phi _{s}-\log \left\{ 
 1+\al A_{s}(\phi )+A_{s}(\phi )\al (e^{z}-1)
 \right\} \\
 &=\phi _{s}-\log \left\{ 1+\al e^{z}A_{s}(\phi )\right\} ,
\end{align*}
which is the claim.
\end{proof}

\begin{proof}[Proof of \cref{;cmain2} via \eqref{;opp}]
Here we prove the latter relation \eqref{;ec2}. 
By identity \eqref{;opp} and \lref{;rel}, the right-hand side 
of \eqref{;ec2} is written as 
\begin{align*}
 &e^{-\mu z}\ex \!\left[ 
 \exp \left\{ 
 \frac{1-e^{-z}}{2A_{\infty }\bigl( T_{2\ga _{\mu }}(\db{\mu })\bigr) }
 \right\} 
 F\bigl( T_{2e^{-z}\ga _{\mu }}(\db{\mu })(s),s\ge 0\bigr) 
 \right] \\
 &=e^{-\mu z}\ex \!\left[ 
 \exp \left\{ (1-e^{-z})\ga _{\mu }\right\} 
 F\bigl( T_{2e^{-z}\ga _{\mu }}(\db{\mu })(s),s\ge 0\bigr) 
 \right] ,
\end{align*}
where the equality is due to \eqref{;ainfty}. By the independence of 
$B$ and $\ga _{\mu }$, and by Fubini's theorem, the above 
expression is computed as 
\begin{align*}
 &\frac{e^{-\mu z}}{\Gamma (\mu )}\int _{0}^{\infty }du\,
 u^{\mu -1}e^{-u}\exp \left\{ (1-e^{-z})u\right\} 
 \ex \!\left[ 
 F\bigl( T_{2e^{-z}u}(\db{\mu })(s),s\ge 0\bigr) \right] \\
 &=\frac{1}{\Gamma (\mu )}\int _{0}^{\infty }dv\,v^{\mu -1}e^{-v}
 \ex \!\left[ 
 F\bigl( T_{2v}(\db{\mu })(s),s\ge 0\bigr) 
 \right] \\
 &=\ex \!\left[ 
 F\bigl( T_{2\ga _{\mu }}(\db{\mu })(s),s\ge 0\bigr) 
 \right] ,
\end{align*}
where we have changed the variables with $u=e^{z}v$ for 
the second line. Owing to \eqref{;opp}, the last expectation 
coincides with the left-hand side of \eqref{;ec2}.
\end{proof}

\begin{rem}\label{;rec1}
On the other hand, applying the same reasoning as above to 
the right-hand side of \eqref{;ec1} leads to the identity 
\begin{align*}
 \left\{ \tr ^{*}_{z}(\db{-\mu })(s)\right\} _{s\ge 0}
 \eqd \left\{ T_{2e^{z}\ga _{\mu }}(\db{\mu })(s)\right\} _{s\ge 0},
\end{align*}
with $\ga _{\mu }$ independent of $B$. The case $z=0$ is 
identity \eqref{;opp}; letting $z\to -\infty $ leads to 
\begin{align*}
 \left\{ 
 \db{-\mu }_{s}-\log \left( 
 1-\frac{\da{-\mu }_{s}}{\da{-\mu }_{\infty }}
 \right) 
 \right\} _{s\ge 0}
 \eqd \left\{ \db{\mu }_{s}\right\} _{s\ge 0},
\end{align*}
which is found in \cite[Proposition~3.1]{myPIII}.
\end{rem}

\cref{;cmain2} enables us to obtain identity \eqref{;inv}.
\begin{prop}\label{;pintr}
Let $\mu >0$ and suppose $B$ and $\ga _{\mu }$ are independent. 
Then we have identity \eqref{;inv}; that is, the process  
\begin{align*}
 \db{-\mu }_{s}-\log \left\{ 
 1+\da{-\mu }_{s}\left( 
 2\ga _{\mu }-\frac{1}{\da{-\mu }_{\infty }}
 \right) 
 \right\} ,\quad s\ge 0,
\end{align*}
is identical in law with $\db{-\mu }$.
\end{prop}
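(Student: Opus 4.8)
The plan is to deduce identity \eqref{;inv} from \cref{;cmain2} by integrating out the gamma variable $\ga _{\mu }$. First observe that, by the definition \eqref{;ttransd} and the identity $\frac{\da{-\mu }_{s}}{\da{-\mu }_{\infty }}(e^{z}-1)=\da{-\mu }_{s}\bigl( 2\ga _{\mu }-1/\da{-\mu }_{\infty }\bigr) $ obtained upon putting $e^{z}=2\ga _{\mu }\da{-\mu }_{\infty }$, the process displayed in the proposition coincides with $\bigl\{ \tr^{*}_{\log (2\ga _{\mu }\da{-\mu }_{\infty })}(\db{-\mu })(s)\bigr\} _{s\ge 0}$ (recall $\da{-\mu }_{\infty }<\infty $ a.s.\ since $\mu >0$), so it suffices to establish \eqref{;inv}. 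Fix a nonnegative measurable functional $F$ on $C([0,\infty );\R )$. Using the independence of $B$ and $\ga _{\mu }$ together with Tonelli's theorem,
\begin{align*}
 &\ex \!\left[ F\bigl( \tr^{*}_{\log (2\ga _{\mu }\da{-\mu }_{\infty })}(\db{-\mu })(s),s\ge 0\bigr) \right] \\
 &=\frac{1}{\Gamma (\mu )}\int _{0}^{\infty }da\,a^{\mu -1}e^{-a}\,
 \ex \!\left[ F\bigl( \tr^{*}_{\log (2a\da{-\mu }_{\infty })}(\db{-\mu })(s),s\ge 0\bigr) \right] .
\end{align*}
Inside the expectation I change variables in the inner integral from $a$ to $z$ through $e^{z}=2a\da{-\mu }_{\infty }$, with Jacobian $da=\frac{e^{z}}{2\da{-\mu }_{\infty }}dz$, and then interchange the $z$-integral with the expectation (Tonelli again), obtaining
\begin{align*}
 \frac{1}{\Gamma (\mu )}\int _{\R }dz\,
 \ex \!\left[ \left( \frac{e^{z}}{2\da{-\mu }_{\infty }}\right) ^{\mu }
 \exp \!\left( -\frac{e^{z}}{2\da{-\mu }_{\infty }}\right)
 F\bigl( \tr^{*}_{z}(\db{-\mu })(s),s\ge 0\bigr) \right] .
\end{align*}

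The key step is to notice that the $t\to \infty $ analogue of \pref{;pttrans}(ii) holds verbatim for $\tr^{*}_{z}$ (since \eqref{;ttransd} is \eqref{;ttrans} with $A_{t}$ replaced by $A_{\infty }$); in particular, letting $s\to \infty $ in $1/A_{s}(\tr^{*}_{z}(\phi ))=1/A_{s}(\phi )+(e^{z}-1)/A_{\infty }(\phi )$ gives $A_{\infty }(\tr^{*}_{z}(\phi ))=e^{-z}A_{\infty }(\phi )$. Hence $e^{z}/(2\da{-\mu }_{\infty })=1/\bigl( 2A_{\infty }(\tr^{*}_{z}(\db{-\mu }))\bigr) $, so the integrand in the last display equals $H\bigl( \tr^{*}_{z}(\db{-\mu })\bigr) $ for the single, $z$-independent, nonnegative measurable functional
\begin{align*}
 H(\phi ):=\bigl( 2A_{\infty }(\phi )\bigr) ^{-\mu }\exp \!\left( -\frac{1}{2A_{\infty }(\phi )}\right) F(\phi ),
\end{align*}
defined on $\{ A_{\infty }(\phi )<\infty \} $. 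Applying \eqref{;ec1} of \cref{;cmain2} with $H$ in place of $F$ gives, for each fixed $z$,
\begin{align*}
 \ex \!\left[ H\bigl( \tr^{*}_{z}(\db{-\mu })\bigr) \right]
 &=e^{-\mu z}\ex \!\left[ \exp \!\left( \frac{1-e^{-z}}{2\da{-\mu }_{\infty }}\right) H\bigl( \db{-\mu }\bigr) \right] \\
 &=e^{-\mu z}\ex \!\left[ \bigl( 2\da{-\mu }_{\infty }\bigr) ^{-\mu }\exp \!\left( -\frac{e^{-z}}{2\da{-\mu }_{\infty }}\right) F\bigl( \db{-\mu }\bigr) \right] ,
\end{align*}
the second line coming from $\frac{1-e^{-z}}{2\da{-\mu }_{\infty }}-\frac{1}{2\da{-\mu }_{\infty }}=-\frac{e^{-z}}{2\da{-\mu }_{\infty }}$.

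Substituting this back, interchanging the $z$-integral with the expectation once more (Tonelli), and evaluating the inner integral by the substitution $w=e^{-z}/(2\da{-\mu }_{\infty })$,
\begin{align*}
 \frac{1}{\Gamma (\mu )}\int _{\R }e^{-\mu z}\exp \!\left( -\frac{e^{-z}}{2\da{-\mu }_{\infty }}\right) dz
 =\frac{\bigl( 2\da{-\mu }_{\infty }\bigr) ^{\mu }}{\Gamma (\mu )}\int _{0}^{\infty }w^{\mu -1}e^{-w}\,dw
 =\bigl( 2\da{-\mu }_{\infty }\bigr) ^{\mu },
\end{align*}
one finds that this factor cancels $\bigl( 2\da{-\mu }_{\infty }\bigr) ^{-\mu }$ and leaves $\ex \!\left[ F(\db{-\mu })\right] $, which is precisely \eqref{;inv}. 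Since every integrand appearing along the way is nonnegative, all these exchanges of the order of integration are justified by Tonelli's theorem and no integrability issue arises; thus the argument is essentially a bookkeeping matter, the only point that is not purely routine being the re-expression of the gamma weight as the $z$-free functional $H$ of the transformed path, which rests on the scaling identity $A_{\infty }(\tr^{*}_{z}(\phi ))=e^{-z}A_{\infty }(\phi )$. (One could alternatively feed Dufresne's identity \eqref{;duf} into the computation, as in \rref{;rduf}, but it is not needed.)
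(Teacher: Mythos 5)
Your proof is correct, and it takes a genuinely different route from the paper's. The paper conditions on $X:=1/\bigl(2\da{-\mu}_{\infty}\bigr)$: it applies \eqref{;ec1} to $F(\cdot)f(X)$, disintegrates both sides over the law of $X$ to obtain the identity of conditional expectations
$\ex \bigl[ F\bigl( \tr ^{*}_{z}(\db{-\mu })\bigr) \rmid| X=u\bigr] =\ex \bigl[ F\bigl( \db{-\mu }\bigr) \rmid| X=e^{z}u\bigr]$
for a.e.\ $u$, upgrades this to all $u$ by constructing continuous versions of these conditional expectations (a nontrivial step requiring Lamperti's relation and Bessel bridges, see \rref{;rcont1}), then substitutes $z=\log (v/u)$ and integrates against $\pr (X\in du)\pr (X\in dv)$, invoking Dufresne's identity $X\eqd \ga _{\mu }$ to identify the outer integration with the law of the independent $\ga _{\mu }$. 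You instead integrate out $\ga _{\mu }$ at the start, and your key move---rewriting the gamma weight as the $z$-free functional $H(\phi )=\bigl( 2A_{\infty }(\phi )\bigr) ^{-\mu }\exp \bigl( -1/(2A_{\infty }(\phi ))\bigr) F(\phi )$ of the \emph{transformed} path via the scaling $A_{\infty }\bigl( \tr ^{*}_{z}(\phi )\bigr) =e^{-z}A_{\infty }(\phi )$---lets you apply \eqref{;ec1} once and finish with an explicit gamma integral. This buys you three things: you work with arbitrary nonnegative measurable $F$ throughout (no continuity assumption, hence no need for regular versions of conditional laws), you avoid Dufresne's identity entirely (consistent with the paper's stance in \rref{;rduf} that \eqref{;duf} should be a consequence rather than an input), and all interchanges are covered by Tonelli. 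What the paper's route buys in exchange is the stronger intermediate statement about conditional laws given $X$, which is what powers the generalization with the function $h(u,v)$ in the remark following \pref{;pintr}; your argument does not directly yield that refinement. The only points to make explicit are that $H$, extended by $0$ on $\{ A_{\infty }=\infty \} $, is a legitimate nonnegative measurable functional to which \cref{;cmain2} applies, and that the joint measurability needed for the first Tonelli interchange is routine; neither is a gap.
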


\begin{proof}
In what follows, we write $X$ for 
$1/\bigl( 2\da{-\mu }_{\infty }\bigr) $ for simplicity. 
We suppose that $F:C([0,\infty );\R )\to \R $ is 
bounded and continuous, and we take a bounded 
measurable function $f:(0,\infty )\to \R $ arbitrarily. 
By noting (cf.\ \pref{;pttrans}\thetag{ii}) that 
\begin{align*}
 \frac{1}{A_{s}\bigl( \tr ^{*}_{z}(\db{-\mu })\bigr) }
 &=\frac{1}{\da{-\mu }_{s}}
 +\frac{e^{z}-1}{\da{-\mu }_{\infty }}\\
 &\xrightarrow[s\to \infty ]{}
 \frac{e^{z}}{\da{-\mu }_{\infty }},
\end{align*}
it follows from \eqref{;ec1} that 
\begin{align*}
 &\ex \!\left[ 
 F\bigl( \tr ^{*}_{z}(\db{-\mu })(s),s\ge 0\bigr) f(X)
 \right] \\
 &=e^{-\mu z}\ex \!\left[ 
 \exp \left\{ (1-e^{-z})X\right\}  
 F\bigl( \db{-\mu }_{s},s\ge 0\bigr) f(e^{-z}X)
 \right] .
\end{align*}
By the fact that $X\eqd \ga _{\mu }$, the left-hand side 
is disintegrated as 
\begin{align*}
 \frac{1}{\Gamma (\mu )}\int _{0}^{\infty }du\,
 u^{\mu -1}e^{-u}f(u)\ex \!\left[ 
 F\bigl( \tr ^{*}_{z}(\db{-\mu })(s),s\ge 0\bigr) \rmid| X=u
 \right] .
\end{align*}
On the other hand, 
by conditioning on $X=u$, and by using the same change of 
the variables as in the last proof, the right-hand side 
is expressed as 
\begin{align*}
 \frac{1}{\Gamma (\mu )}\int _{0}^{\infty }dv\,v^{\mu -1}e^{-v}
 f(v)\ex \!\left[ 
 F\bigl( \db{-\mu }_{s},s\ge 0\bigr) \rmid| X=e^{z}v 
 \right] .
\end{align*}
Since the last two expressions agree for any $f$, we conclude that 
\begin{align*}
 \ex \!\left[ 
 F\bigl( \tr ^{*}_{z}(\db{-\mu })(s),s\ge 0\bigr) \rmid| X=u
 \right] 
 =\ex \!\left[ 
 F\bigl( \db{-\mu }_{s},s\ge 0\bigr) \rmid| X=e^{z}u 
 \right] 
\end{align*}
for a.e.\ $u>0$. By the continuity of $F$, we may assume that 
each side admits a continuous version (see \rref{;rcont1}) and hence 
that the above relation holds for all $u>0$. Given $v>0$, 
we insert $\log (v/u)$ into $z$ to obtain 
\begin{align*}
 \ex \!\left[ 
 F\bigl( \tr ^{*}_{\log (v/u)}(\db{-\mu })(s),s\ge 0\bigr) \rmid| X=u
 \right] 
 =\ex \!\left[ 
 F\bigl( \db{-\mu }_{s},s\ge 0\bigr) \rmid| X=v 
 \right] ,
\end{align*}
which is valid for all $u,v>0$. Integrating both sides with respect to 
the measure $\pr (X\in du)\pr (X\in dv)$ over 
$(0,\infty )^{2}$, we reach the conclusion owing to the 
arbitrariness of $F$.
\end{proof}

\begin{rem}
Let $h$ be an arbitrary positive measurable function on 
$(0,\infty )^{2}$ and replace in the last displayed equation 
$F$ by a functional of the form 
\begin{align*}
 F\bigl( \tr ^{*}_{\log (uh(u,v))}(\phi )(s),s\ge 0\bigr) 
\end{align*} 
for $\phi \in C([0,\infty );\R )$ with $A_{\infty }(\phi )<\infty $. 
Then, using a semigroup property as in \pref{;pttrans}\thetag{iii} 
of $\tr^{*}_{z},\,z\in \R $, we may have the following generalization 
of \pref{;pintr}: the two processes 
\begin{align*}
 &\db{-\mu }_{s}-\log \left\{ 
 1+\frac{\da{-\mu }_{s}}{\da{-\mu }_{\infty }}
 \left( \ga _{\mu }h\biggl( 
 \frac{1}{2\da{-\mu }_{\infty }},\ga _{\mu }
 \biggr) -1\right) 
 \right\} ,\quad s\ge 0,\\
 &\db{-\mu }_{s}-\log \left\{ 
 1+\frac{\da{-\mu }_{s}}{\da{-\mu }_{\infty }}
 \left( \ga _{\mu }h\biggl( 
 \ga _{\mu },\frac{1}{2\da{-\mu }_{\infty }}
 \biggr) -1\right) 
 \right\} ,\quad s\ge 0,
\end{align*}
are identical in law; taking either $h(u,v)=1/u$ or $h(u,v)=1/v$ 
yields \pref{;pintr}. The above identity in law may also be 
verified by means of identity \eqref{;opp}. Indeed, if we let 
$\hat{\ga }_{\mu }$ be a copy of $\ga _{\mu }$ such that 
$B$, $\ga _{\mu }$ and $\hat{\ga }_{\mu }$ 
are independent, and if we replace $\db{-\mu }$ by 
$T_{2\hat{\ga }_{\mu }}(\db{\mu })$ in the above two processes, 
then we obtain the process 
$T_{2\ga _{\mu }\hat{\ga }_{\mu }
h(\hat{\ga }_{\mu },\ga _{\mu })}(\db{\mu })$ 
for the former and likewise 
$T_{2\ga _{\mu }\hat{\ga }_{\mu }
h(\ga _{\mu },\hat{\ga }_{\mu })}(\db{\mu })$ 
for the latter, and these two are clearly identical in law 
because of the independence of the three elements involved.
\end{rem}

\begin{rem}\label{;rcont1}
To see that there exists a continuous version for a function 
of the form 
\begin{align}\label{;cexp}
 \ex \!\left[ 
 F\bigl( \db{-\mu }_{s},s\ge 0\bigr) \rmid| \da{-\mu }_{\infty }=u 
 \right] ,\quad u>0,
\end{align}
with $F:C([0,\infty );\R )\to \R $ a bounded continuous functional, 
we recall Lamperti's relation for 
the exponential functionals of $\db{-\mu }$: 
there exists a $2(1-\mu )$-dimensional Bessel process $\rho $ 
starting from $1$, such that 
\begin{align}\label{;lamp}
 e^{\db{-\mu }_{s}}=\rho _{\da{-\mu }_{s}},\quad s\ge 0.
\end{align}
See, e.g., \cite[Section~3]{myPIII}. In particular, letting 
$s\to \infty $ on both sides yields the relation 
$\da{-\mu }_{\infty }=\tau _{0}(\rho )$. By setting 
\begin{align}\label{;inval}
 \al _{s}(\rho )
 =\inf \left\{ 
 t\ge 0;\,\int _{0}^{t}
 \frac{dv}{\rho _{v}^{2}}\ind _{\{ v<\tau _{0}(\rho )\} }>s
 \right\} ,\quad s\ge 0,
\end{align}
Lamperti's relation \eqref{;lamp} entails the 
representation of $\db{-\mu }$ in terms of $\rho $:
\begin{align*}
 \db{-\mu }_{s}=\log \rho _{\al _{s}(\rho )},\quad s\ge 0.
\end{align*}
Note that, conditionally on $\tau _{0}(\rho )=u>0$, the process 
$\rho $ is a $2(\mu +1)$-dimensional Bessel bridge of duration 
$u$ ending at $0$ 
(see, e.g., \cite[Exercises~XI.1.23 and XI.3.12]{ry}), and hence 
identical in law with 
\begin{align*}
 \left( 1-\frac{s}{u}\right) \hat{\rho }_{us/(u-s)},\quad 
 0\le s<u, 
\end{align*}
where $\hat{\rho }$ is a $2(\mu +1)$-dimensional Bessel process 
starting from $1$ (see, e.g., \cite[Exercise~XI.3.6]{ry}). These 
facts amount to the identity between the law 
of $\db{-\mu }$ conditioned on $\da{-\mu }_{\infty }=u$ and 
that of the process  
\begin{align}\label{;lampd}
 \log \left\{ 
 \frac{u}{u+\al _{s}(\hat{\rho })}\hat{\rho }_{\al _{s}(\hat{\rho })}
 \right\} ,\quad s\ge 0,
\end{align}
with $\al _{s}(\hat{\rho })$ defined through \eqref{;inval}, 
$\hat{\rho }$ replacing $\rho $ (in this case, 
$\tau _{0}(\hat{\rho })=\infty $ a.s.). It is clear that 
the law of \eqref{;lampd} depends on $u$ continuously. 
Therefore the function in \eqref{;cexp} admits a continuous 
version. 
\end{rem}

\begin{rem}
Lamperti's relation for the exponential functionals of $\db{\mu }$ 
reveals that \eqref{;lampd} is identical in law with the process 
\begin{align*}
 \db{\mu }_{s}-\log \left( 1+\da{\mu }_{s}/u\right) ,\quad s\ge 0,
\end{align*}
which, together with $1/\da{-\mu }_{\infty }\eqd 2\ga _{\mu }$, 
entails \eqref{;opp}. A proof of identity \eqref{;opp} along this 
line may also be found in \cite[Section~3]{myPIII}.
\end{rem}

\section{Proof of \tref{;tmain3}}\label{;sptmain3}

This section is devoted to the proof of \tref{;tmain3}. 
For each $z\in \R $, we denote by $b^{z}=\{ b^{z}_{s}\} _{0\le s\le t}$ 
a Brownian bridge of duration $t$, starting from $0$ and 
ending at $z$. Let $F:C([0,t];\R )\to [0,\infty )$ be a bounded 
continuous functional and $f:(0,\infty ) \to [0,\infty )$ a bounded 
continuous function.

\begin{lem}\label{;lpinned}
Fix $z\in \R $ and let $\hB =\{ \hB _{s}\} _{s\ge 0}$ be a 
Brownian motion independent of $b^{z}$. Then it holds that, 
for any $x\in \R $, 
\begin{align*}
 &\ex \!\left[ 
 F\bigl( 
 \tr _{B_{t}-z}(B)(s),s\le t
 \bigr) f(A_{t})\frac{\cosh (x+z)}{\sqrt{2\pi A_{t}}}
 \exp \left\{ 
 -\frac{\bigl( 
 \sinh (x+z)-e^{B_{t}}\sinh x
 \bigr) ^{2}}{2A_{t}}
 \right\} 
 \right] \\
 &=\ex \!\left[ 
 F(b^{z}_{s},s\le t)f\bigl( 
 \tau _{\cosh (x+z)}(\hB ^{(\cosh x/Z_{t}(b^{z}))})
 \bigr) 
 \right] \frac{1}{\sqrt{2\pi t}}\exp \left( 
 -\frac{z^{2}}{2t}
 \right) .
\end{align*}
\end{lem}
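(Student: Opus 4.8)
The plan is to prove \lref{;lpinned} by disintegrating the expectations on both sides over the terminal value $B_{t}$ (resp. $b^{z}_{t}=z$), so that the identity reduces to a statement about Brownian bridges and the already-established \lref{;lext}. First I would write, on the left-hand side, the expectation as an integral against the law of $B_{t}$: conditionally on $B_{t}=y$, the process $\{B_{s}\}_{0\le s\le t}$ is a Brownian bridge $b^{y}$ of duration $t$ from $0$ to $y$, and the Gaussian density $\tfrac{1}{\sqrt{2\pi t}}e^{-y^{2}/(2t)}$ appears. The transform $\tr_{B_{t}-z}(B)$ then becomes $\tr_{y-z}(b^{y})$, which by \pref{;pttrans}\thetag{i} has terminal value $y-(y-z)=z$; the crucial point is that $\tr_{y-z}$ maps a bridge ending at $y$ to a path ending at $z$, and in fact — this is the heart of the matter — $\{\tr_{y-z}(b^{y})(s)\}_{0\le s\le t}$ conditioned appropriately is a bridge ending at $z$. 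So the left side becomes $\int_{\R} dy\,\tfrac{1}{\sqrt{2\pi t}}e^{-y^{2}/(2t)}\, \ex[\cdots \text{ under } b^{y}]$.

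Next I would recognize the Gaussian-type weight
\[
 \frac{\cosh(x+z)}{\sqrt{2\pi A_{t}}}\exp\left\{-\frac{\bigl(\sinh(x+z)-e^{B_{t}}\sinh x\bigr)^{2}}{2A_{t}}\right\}
\]
as, up to the substitution coming from \pref{;pttrans}, the transition density in \eqref{;fact} evaluated so as to match the first-hitting-time description in \lref{;lext}; concretely, $\sqrt{2\pi A_{t}}^{-1}\cosh(x+z)\exp\{-(\cdots)^{2}/(2A_{t})\}\,dA_{t}$ is the density of $\tau_{\cosh(x+z)}(\hB^{(\cosh x/Z_{t})})$ given the relevant $\sigma$-field, using the relation $A_{t}=e^{B_{t}}Z_{t}$ and that on the transformed path $A_{t}(\tr_{y-z}(b^{y}))=e^{z-y}A_{t}(b^{y})$ by \pref{;pttrans}\thetag{ii} while $Z_{t}$ is invariant by \pref{;pttrans}\thetag{iv}. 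The idea is that the left-hand integrand, after this identification, is exactly $\ex[F(b^{z}_{s},s\le t)f(\tau_{\cosh(x+z)}(\hB^{(\cosh x/Z_{t}(b^{z}))}))]$ integrated against a measure in $y$ that, once the $f$-dependence has been absorbed into the hitting time, collapses; the leftover normalization is precisely $\tfrac{1}{\sqrt{2\pi t}}e^{-z^{2}/(2t)}$, which is the density of $B_{t}$ at $z$ — i.e., the normalization turning $\{B_{s}\}$ into $b^{z}$. The cleanest way to organize this is to first prove the identity with $f\equiv 1$ by invoking \lref{;lext} directly (the bridge-conditioned version of \eqref{;ext}), and then to reinstate general $f$ by noting that $f$ only sees $A_{t}$ on the left and the hitting time on the right, both of which \lref{;lext} already matches jointly with the $Z$-path (hence with $F$).

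The main obstacle I anticipate is the bookkeeping of the pinning: making rigorous that conditioning the \emph{transformed} process $\tr_{B_{t}-z}(B)$ on its own terminal value (which is deterministically $z$) produces exactly the bridge $b^{z}$, and tracking how the Jacobian/normalization constants from the Brownian-bridge disintegration interact with the change of variables $A_{t}\mapsto A_{t}(\tr_{z}(B))=e^{-z}A_{t}$ in \pref{;pttrans}\thetag{ii}. A related subtlety is that \lref{;lext} is stated for the process $B$ itself, so I need its bridge version; this should follow by the same conditioning argument, disintegrating both sides of \eqref{;ext} over $B_{t}$ and matching densities (the continuity of the relevant densities, as used in the passage in \sref{;sptmain2} deriving $f_{1}=f_{2}$, guarantees the pointwise-in-$z$ identity). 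Once these two disintegration bookkeeping points are settled, the rest is the routine algebra identifying the hyperbolic Gaussian weight with the hitting-time density \eqref{;fact} after the substitutions $A_{t}=e^{B_{t}}Z_{t}$ and $\cosh^{2}-\sinh^{2}=1$, exactly as in the computation already carried out before the statement of \tref{;tmain2}.
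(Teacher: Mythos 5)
Your overall architecture (disintegrate over a terminal value, identify explicit densities, upgrade from a.e.\ to everywhere by continuity) matches the paper's, and your identifications of the two weights --- the hyperbolic Gaussian factor as the density of $e^{B_{t}}\sinh x+\beta (A_{t})$ at $\sinh (x+z)$ with Jacobian $\cosh (x+z)$, and the right-hand side as the hitting-time density \eqref{;fact} combined with the bridge normalization --- are correct. But there is a genuine gap at the central step: you never establish that the weighted law of the \emph{path} $\tr _{B_{t}-z}(B)$ is that of the bridge $b^{z}$, and the route you propose for it does not work. \lref{;lext} controls only the triple $\bigl( e^{B_{t}}\sinh x+\beta (A_{t}),A_{t},\{ Z_{s}\} \bigr) $; it says nothing about the law of $\{ B_{s}\} $ or of $\{ \tr _{\cdot }(B)(s)\} $ themselves. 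The upgrade from the $Z$-path to the $B$-path is precisely the content of the proof of \tref{;tmain1d} (via \eqref{;deria} and \pref{;pttrans}\thetag{ii}, reconstructing $A_{s}$ and then differentiating), culminating in identity \eqref{;id1d}; that identity is the ingredient the paper's proof actually invokes, and your proposal omits it entirely.

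Moreover, the ``bridge-conditioned version of \eqref{;ext}'' you want to obtain by disintegrating both sides of \eqref{;ext} over $B_{t}$ and matching densities is false: an identity in law survives conditioning only on a coordinate that is matched between the two sides. In \eqref{;ext} the matched coordinates are the displayed triple; $B_{t}$ equals $\log (A_{t}/Z_{t})$ on the left but corresponds to $\log (\T /Z_{t})$ on the right, so the joint laws of the triple together with $B_{t}$ differ on the two sides (on the right, $B_{t}$ determines the first coordinate $\sinh (x+B_{t})$; on the left it does not determine $e^{B_{t}}\sinh x+\beta (A_{t})$). Conditioning both sides on $B_{t}=y$ therefore destroys the identity, and for a fixed $y$ the path $\tr _{y-z}(b^{y})$ is certainly not a bridge ending at $z$ in law. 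The paper avoids this by disintegrating \eqref{;id1d} over the \emph{matched} terminal values, namely $B_{t}+\zeta =\tr _{-\zeta }(B)(t)$ on the left and $B_{t}$ on the right; only the latter produces $b^{z}$, while the former is handled through the explicit Gaussian density of $\beta (A_{t})$ given $B$ (whence the $\cosh (x+z)$ Jacobian). If you replace your conditioning on $B_{t}$ by this disintegration of \eqref{;id1d}, the rest of your plan --- the density identification via \eqref{;fact} and the continuity-in-$z$ argument --- goes through as in the paper.
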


\begin{proof}
For each fixed $x\in \R $, we denote by the left-hand and 
right-hand sides of the claimed equality by $g_{1}(z)$ and 
$g_{2}(z)$, respectively. Notice that $g_{1}$ and $g_{2}$ are 
continuous in $z$ by the assumptions on $F$ and $f$ 
(see \rref{;rcont2}). We keep the notation of \tref{;tmain1d}.
By identity \eqref{;id1d}, we know that the two finite measures 
\begin{align}\label{;twofm}
 \ex \!\left[ 
 F\bigl( \tr _{B_{t}-z}(B)(s),s\le t\bigr) f(A_{t});\,B_{t}+\zeta \in dz
 \right] , && 
 \ex \!\left[ F(B_{s},s\le t)f(\T );B_{t}\in dz\right] ,
\end{align}
on $\R $, agree. It is clear that the latter admits the density function 
$g_{2}$ with respect to the Lebesgue measure. On the other hand, 
for an arbitrary $z\in \R $, 
noting that the event that $B_{t}+\zeta \le z$ is equal to the 
event that $e^{B_{t}}\sinh x+\beta (A_{t})\le \sinh (x+z)$ 
by the definition of $\zeta $, we see that, by the independence of 
$B$ and $\beta $, and by Fubini's theorem,  
\begin{align*}
 &\ex \!\left[ 
 F\bigl( \tr _{B_{t}-(B_{t}+\zeta )}(B)(s),s\le t\bigr) 
 f(A_{t});\,B_{t}+\zeta \le z
 \right] \\
 &=\int _{-\infty }^{\sinh (x+z)}dy\,\ex \!\Biggl[ 
 F\bigl( \tr _{B_{t}-\argsh y+x}(B)(s),s\le t\bigr) f(A_{t})\\
 &\qquad \qquad \qquad \qquad \times \frac{1}{\sqrt{2\pi A_{t}}}
 \exp \left\{ 
 -\frac{\bigl( y-e^{B_{t}}\sinh x\bigr) ^{2}}{2A_{t}}
 \right\} 
 \Biggr] .
\end{align*}
Differentiating the last expression with respect to $z$ entails that 
the former measure in \eqref{;twofm} admits the density $g_{1}$. 
Therefore $g_{1}(z)=g_{2}(z)$ for a.e.\ $z$, and hence for all 
$z$ by the continuity of $g_{1}$ and $g_{2}$. This proves the lemma.
\end{proof}

\begin{rem}\label{;rcont2}
As for the continuity of $g_{2}$ mentioned above, notice that, 
in view of formula \eqref{;fact}, the expectation in the definition of 
$g_{2}$ may be expressed as 
\begin{align*}
 \int _{0}^{\infty }du\,\frac{\cosh (x+z)}{\sqrt{2\pi u^{3}}}f(u)
 \ex \!\left[ 
 F(b^{z}_{s},s\le t)\exp \left\{ 
 -\frac{\bigl( \cosh (x+z)-u\cosh x/Z_{t}(b^{z})\bigr) ^{2}}{2u}
 \right\} 
 \right] ,
\end{align*}
by the independence of $b^{z}$ and $\hB $, and that 
$b^{z}\eqd \{ b^{0}_{s}+(z/t)s\} _{0\le s\le t}$.
\end{rem}

\begin{proof}[Proof of \tref{;tmain3}]
Since $x$ is arbitrary, we may take $x=-z$ in \lref{;lpinned}, 
which entails the relation 
\begin{align*}
 &\ex \!\left[ 
 F\bigl( 
 \tr _{B_{t}-z}(B)(s),s\le t
 \bigr) f(A_{t})\frac{1}{\sqrt{2\pi A_{t}}}
 \exp \left( 
 -\frac{e^{2B_{t}}\sinh ^{2}z}{2A_{t}}
 \right) 
 \right] \\
 &=\ex \!\left[ 
 F(b^{z}_{s},s\le t)f\bigl( 
 \tau _{1}(\hB ^{(\cosh z/Z_{t}(b^{z}))})
 \bigr) 
 \right] \frac{1}{\sqrt{2\pi t}}\exp \left( 
 -\frac{z^{2}}{2t}
 \right) .
\end{align*}
Multiplying both sides by $\cosh z$, we integrate both sides 
with respect to $z$ over $\R $. Then the left-hand side turns into 
\begin{align*}
 &\ex \!\left[ 
 F\bigl( 
 \tr _{B_{t}-\argsh \beta (e^{-2B_{t}}A_{t})}(B)(s),s\le t
 \bigr) f(A_{t})e^{-B_{t}}
 \right] \\
 &=e^{t/2}\ex \!\left[ 
 F\Bigl( 
 \tr _{
 \db{-1}_{t}-\argsh \beta \bigl( e^{-2\db{-1}_{t}}\!\!\da{-1}_{t}\bigr) 
 }(\db{-1})(s),s\le t\Bigr) f\bigl( \da{-1}_{t}\bigr) 
 \right] ,
\end{align*}
with $\beta $ independent of $B$, where the second line is due to 
the Cameron--Martin relation. On the other hand, the right-hand 
side turns into 
\begin{align*}
 e^{t/2}\ex \!\left[ 
 F\bigl( \db{\ve }_{s},s\le t\bigr) f\bigl( 
 \tau _{1}(\hB ^{(\cosh \db{\ve }_{t}/\dz{\ve }_{t})})
 \bigr) 
 \right] 
\end{align*}
by the Cameron--Martin relation. Here $B$, $\hB $ and are 
$\ve $ independent. Since the last two expressions agree for any 
$F$ and $f$, we obtain the identity in law 
\begin{equation}\label{;e1ptmain3}
\begin{split}
 &\left( 
 \left\{ 
 \tr _{\db{-1}_{t}-\argsh \beta \bigl( e^{-2\db{-1}_{t}}\!\!\da{-1}_{t}\bigr) }
 (\db{-1})(s)\right\} _{0\le s\le t},\,\da{-1}_{t}
 \right) \\
 &\eqd 
 \left( 
 \left\{ \db{\ve }_{s}\right\} _{0\le s\le t},
 \,\tau _{1}(\hB ^{(\cosh \db{\ve }_{t}/\dz{\ve }_{t})})
 \right) .
\end{split}
\end{equation}
By observing that  
\begin{align*}
 \left( 
 \{ \db{-1}_{s}\} _{0\le s\le t},\,e^{-2\db{-1}_{t}}\!\!\da{-1}_{t},
 \,\da{-1}_{t}
 \right) 
 \eqd 
 \left( 
 \left\{ R(\db{1})(s)\right\} _{0\le s\le t},\,\da{1}_{t},
 \,e^{-2\db{1}_{t}}\!\da{1}_{t}
 \right) 
\end{align*}
due to \eqref{;itrev}, 
the left-hand side of \eqref{;e1ptmain3} is identical in law with 
\begin{align*}
 &\left( 
 \left\{ 
 \tr _{-\db{1}_{t}-\argsh \beta (\da{1}_{t})}
 \bigl( R(\db{1})\bigr) (s)\right\} _{0\le s\le t},
 \,e^{-2\db{1}_{t}}\!\da{1}_{t}
 \right) \\
 &=\left( 
 \left\{ 
 R\bigl( \tr _{\db{1}_{t}+\argsh \beta (\da{1}_{t})}
 (\db{1})\bigr) (s)\right\} _{0\le s\le t},
 \,e^{-2\db{1}_{t}}\!\da{1}_{t}
 \right) ,
\end{align*}
where we used \pref{;pttrans}\thetag{v} for the second line. 
Therefore, by observing that $(R\circ R)(\phi )=\phi -\phi _{0}$ for every 
$\phi \in C([0,t];\R )$, we have, from 
\eqref{;e1ptmain3}, 
\begin{equation}\label{;e2ptmain3}
\begin{split}
 &\left( 
 \left\{ \tr _{\db{1}_{t}+\argsh \beta (\da{1}_{t})}
 (\db{1})(s)\right\} _{0\le s\le t},\,e^{-2\db{1}_{t}}\!\da{1}_{t}
 \right) \\
 &\eqd \left( 
 \left\{ 
 R(\db{\ve })(s)
 \right\} _{0\le s\le t},
 \,\tau _{1}(\hB ^{(\cosh \db{\ve }_{t}/\dz{\ve }_{t})})
 \right) .
\end{split}
\end{equation}
Note that $\db{\ve }_{t}$ and $\dz{\ve }_{t}$ in the second coordinate 
on the right-hand side may be expressed respectively 
as $-R(\db{\ve })(t)$ and $Z_{t}\bigl( R(\db{\ve })\bigr) $ 
by \lref{;ltrev}. Moreover, 
\begin{align*}
 \left\{ 
 R(\db{\ve })(s)
 \right\} _{0\le s\le t}
 \eqd \{ \db{\ve }_{s}\} _{0\le s\le t}
\end{align*}
because of \eqref{;itrev} and the independence of $B$ and $\ve $. 
Consequently, the right-hand side of \eqref{;e2ptmain3} is identical 
in law with that of the claimed identity \eqref{;etmain3}. As for 
the left-hand side of \eqref{;e2ptmain3}, we use the symmetry 
$\beta \eqd -\beta $ and the independence of $B$ and $\beta $ 
to see that it is identical in law with the left-hand side of 
\eqref{;etmain3}. Therefore identity \eqref{;etmain3} is proven. 
The latter identity in the theorem is shown by evaluating the 
first coordinates on both sides of \eqref{;etmain3} at $s=t$ and 
noting property \thetag{iv} in \pref{;pttrans}; in fact, we have 
\begin{align*}
 \left( 
 \beta \bigl( \da{1}_{t}\bigr) ,\,e^{-2\db{1}_{t}}\!\da{1}_{t},\,
 \{ \dz{1}_{s}\} _{0\le s\le t}
 \right) \eqd 
 \left( 
 \sinh \db{\ve }_{t},\,\tau _{1}(\hB ^{(\cosh \db{\ve }_{t}/\dz{\ve }_{t})}),
 \,\{ \dz{\ve }_{s}\} _{0\le s\le t}
 \right) ,
\end{align*} 
the identity between the third coordinates of which is consistent with 
\eqref{;oppz}. 
The proof of \tref{;tmain3} completes.
\end{proof}

\section{Concluding remarks}\label{;scr}

We conclude this paper with the following two remarks.

\begin{enumerate}[1.]{}
\item \tsref{;tmain1} and \ref{;tmain1d}, as well as \tref{;tmain2}, 
may be extended to the case where the deterministic time $t$ therein 
is replaced by any $\{ \Z _{s}\} $-stopping time $\tau $ satisfying 
$\pr (0<\tau <\infty )=1$. This is due to the fact that, as stated in 
\cite[Proposition~1.7]{myPI},  the assertion of \pref{;pmy} holds true 
with the above replacement because of the diffusion property 
of $\{ Z_{s}\} _{s\ge 0}$, as well as to the invariance of the path 
transformation $Z$ under the composition with $\tr _{z}$ as 
in \pref{;pttrans}\thetag{iv}, where we also consider a slight 
modification of the definition \eqref{;ttrans} of $\tr _{z}$ 
in accordance with replacement of $t$ by $\tau $.

\item \thetag{1} The path transformations $\tr _{z}$ provide an example 
of anticipative transformations studied by a number of researchers 
in the framework of the Malliavin calculus; see \cite{ram}, \cite{kus} 
and \cite{yano}, to name a few. Of concern in their studies is a 
Girsanov-type formula for those transformations, in which the 
density with respect to the underlying Wiener measure involves 
a Carleman--Fredholm determinant. Although we have not pursued it 
in this paper, we expect that \tref{;tmain2} will furnish an 
example for which the associated Carleman--Fredholm determinant 
is explicitly calculated. 

\thetag{2} Instead of going into details, here we content ourselves 
with showing a link between \tref{;tmain2} and the Malliavin calculus. 
For simplicity, let $F$ be a 
cylindrical functional on $C([0,t];\R )$ as of the form \eqref{;cyl}, 
with $f$ a bounded smooth function with bounded first derivatives 
which we denote by $\partial _{i}f,\,i=1,\ldots ,n$. 
Observe that 
\begin{align*}
 \frac{d}{dz}F\bigl( 
 \tr _{z}(B)(s),s\le t
 \bigr) 
 =-\left\langle \!
 (DF)(\tr _{z}(B)),
 \frac{\int _{0}^{\cdot }e^{2\tr _{z}(B)(s)}\,ds}{A_{t}(\tr _{z}(B))}
 \right\rangle _{H},\quad z\in \R ,
\end{align*}
where $\langle \cdot ,\cdot \rangle _{H}$ stands for the inner 
product on the Cameron--Martin subspace $H$ of $C([0,t];\R )$, 
and $DF$ denotes the Malliavin derivative of $F$, namely, 
\begin{align*}
 DF(\phi )=\sum _{i=1}^{n}
 \partial _{i}f(\phi _{t_{1}},\ldots ,\phi _{t_{n}})
 \int _{0}^{\cdot }\ind _{[0,t_{i}]}(s)\,ds,
 \quad \phi \in C([0,t];\R ).
\end{align*}
Then, taking the derivative at $z=0$ on both sides of \eqref{;t21} yields 
\begin{align}\label{;ibp}
 \ex \!\left[ 
 \biggl\langle DF(B),\frac{\int _{0}^{\cdot }e^{2B_{s}}\,ds}{A_{t}}
 \biggr\rangle _{H}
 \right] 
 =\ex \!\left[ 
 \frac{\sinh B_{t}}{Z_{t}}F(B_{s},s\le t)
 \right] .
\end{align}
We denote by $\delta $ the Skorokhod integral, namely the adjoint of 
the operator $D$; see \cite[Definition~1.3.1]{nua}. 
Equation \eqref{;ibp} may be explained in terms of $\delta $. 
Indeed, by the definition of $\delta $, the left-hand side of 
\eqref{;ibp} is rewritten as 
\begin{align}\label{;ibpr}
 \ex \!\left[ 
 F(B_{s},s\le t)\delta \left( 
 \frac{\int _{0}^{\cdot }e^{2B_{s}}\,ds}{A_{t}}
 \right) 
 \right] ,
\end{align}
in which, by \cite[Proposition~1.3.3]{nua}, the Skorokhod integral 
is expanded into 
\begin{align}\label{;div}
 \frac{1}{A_{t}}\delta \left( \int _{0}^{\cdot }e^{2B_{s}}\,ds\right) 
 -\left\langle D\left( \frac{1}{A_{t}}\right) ,
 \int _{0}^{\cdot }e^{2B_{s}}\,ds
 \right\rangle _{H}.
\end{align}
Notice that 
\begin{align*}
 \delta \left( \int _{0}^{\cdot }e^{2B_{s}}\,ds\right) 
 &=\int _{0}^{t}e^{2B_{s}}\,dB_{s}\\
 &=\frac{1}{2}\left( e^{2B_{t}}-1\right) -A_{t}, 
\end{align*}
where the second line is due to It\^o's formula. On the other hand, 
since the directional derivative of the functional $1/A_{t}$ along 
each $h\in H$ is 
\begin{align*}
 -\frac{2}{(A_{t})^{2}}\int _{0}^{t}h_{s}e^{2B_{s}}\,ds,
\end{align*}
the $H$-inner product in \eqref{;div} is equal to $-1$.
These amount to 
\begin{align*}
 \delta \left( 
 \frac{\int _{0}^{\cdot }e^{2B_{s}}\,ds}{A_{t}}
 \right) 
 &=\frac{e^{2B_{t}}-1}{2A_{t}}\\
 &=\frac{\sinh B_{t}}{Z_{t}}
\end{align*}
by the definition of $Z_{t}$, which, together with \eqref{;ibpr}, 
verifies \eqref{;ibp}. Similarly, differentiation of both sides of 
\eqref{;t21} at any fixed $z$, with relation \eqref{;t21} applied to, 
also yields 
\begin{align*}
 &\ex \!\left[ 
 \exp \left\{ 
 \frac{\cosh B_{t}-\cosh (z+B_{t})}{Z_{t}}
 \right\} 
 \biggl\langle DF(B),\frac{\int _{0}^{\cdot }e^{2B_{s}}\,ds}{A_{t}}
 \biggr\rangle _{H}
 \right] \\
 &=\ex \!\left[ 
 \frac{\sinh (z+B_{t})}{Z_{t}}\exp \left\{ 
 \frac{\cosh B_{t}-\cosh (z+B_{t})}{Z_{t}}
 \right\} F(B_{s},s\le t)
 \right] .
\end{align*}
This equality may be verified in the same manner 
as above; indeed, a direct computation shows that 
\begin{align*}
 &\delta \left( 
 \exp \left\{ 
 \frac{\cosh B_{t}-\cosh (z+B_{t})}{Z_{t}}
 \right\} \frac{\int _{0}^{\cdot }e^{2B_{s}}\,ds}{A_{t}}
 \right) \\
 &=\frac{\sinh (z+B_{t})}{Z_{t}}\exp \left\{ 
 \frac{\cosh B_{t}-\cosh (z+B_{t})}{Z_{t}}
 \right\} .
\end{align*}
\end{enumerate}


\end{document}